\def\R{{\mathbb R}}
\newtheorem{assumption}[theorem]{Assumption}
\newcommand{\Rp}{\mathbb{R}_{>0}} 
\newcommand{\defm}[1]{\emph{#1}}
\newcommand{\Lcal}{\ensuremath{\mathcal{L}}}
\newcommand{\de}{\mathrm{d}}
\newcommand{\N}{\ensuremath{\mathbb{N}}}
\begin{document}


\title*{Controllability of continuous networks and a kernel-based learning approximation}
\titlerunning{Controllability of continuous networks and a kernel-based learning approximation}
\author{Michael Herty, Chiara Segala, Giuseppe Visconti}
\authorrunning{M. Herty, C. Segala, G. Visconti}
\institute{
Michael Herty, Chiara Segala \at  IGPM, RWTH Aachen University, Templergraben 55, D-52062 Aachen, Germany; \email{herty,segala@igpm.rwth-aachen.de}
\and  Giuseppe Visconti \at Department of Mathematics ``G.~Castelnuovo'', Sapienza University of Rome, P.le Aldo Moro 5, 00185 Roma, Italy; \email{giuseppe.visconti@uniroma1.it}
}
	
\maketitle
	
\let\thefootnote\relax\footnotetext{\hspace{1.8ex}
M.H. and C.S. thank the Deutsche Forschungsgemeinschaft (DFG, German Research Foundation) for the financial support 
442047500/SFB1481 within the projects B04, B05, B06, and SPP 2298 Theoretical Foundations of Deep Learning  within the Project(s) HE5386/23-1, Meanfield Theorie zur Analysis von Deep Learning Methoden (462234017).

G.V. acknowledges the support of MUR (Ministry of University and Research) under the MUR-PRIN PNRR Project 2022 No.~P2022JC95T ``Data-driven discovery and control of multi-scale interacting artificial agent systems'', the PRIN Project 2017 No.~2017KKJP4X ``Innovative numerical methods for evolutionary partial differential equations and applications'', the PNRR-MUR project PE0000013-FAIR.
}

\abstract{
Residual deep neural networks  are formulated as interacting particle systems leading to a  description through neural differential equations, and, in the case of large input data, through mean-field neural networks. The mean-field description allows also   the recast of the training processes as  a controllability problem for the solution to the mean-field dynamics. We  show theoretical results on the controllability of the linear microscopic and mean-field dynamics through the Hilbert Uniqueness Method  and  propose a computational approach based on kernel learning methods to solve numerically, and efficiently, the training problem. Further aspects of the structural properties of the mean-field equation will be reviewed.}
\ \\
\textbf{Mathematics Subject Classification (2020)}  49J15, 49J20, 35Q49, 92B20, 90C31.
\\ \\
\textbf{Keywords} Neural networks, mean-field limit, controllability, kernel methods.

\section{{Introduction}} \label{sec:introduction}

In recent years, there has been an increasing interest in machine learning and data science ~\cite{Wooldridge2020,LalmuanawaHussainChhakchhuak2020,MuellerVincentBostrom2016} with applications such as human speech recognition~\cite{MengistuRudzicz2011}, competition in strategic game systems~\cite{LiXingFenghuaCheng2020}, intelligent routing in content delivery networks~\cite{FadlullahZubairMaoTangKato2019}, and autonomous vehicles  operations~\cite{STERN2018}.
The intersection of mathematics and artificial intelligence allows the use of machine learning tools to tackle difficulties arising in numerical methodologies, such as high-dimensional parameter optimization, in the modelling of physics-based operators through experimental data or uncertainty quantification, see e.g. ~\cite{Mishra2019,LyeMishraRay2020,ZhangGuoKarniadakis2020,RaissiPerdikarisKarnidakis2019,DiscacciatiHesthavenRay2020,RayHesthaven2019,MagieraRayHesthavenRohde2020,RayHesthaven2018}.

Here, we are interested in  a particular class of learning-based methods, the deep residual neural networks (ResNets). Given a set of input data $x_i^0$, $i=1,\dots,M$, the ResNet propagates those through the layers $\kappa=0,\dots,L+1$, to provide a state prediction $x_i(L+1)$. This state is compared with given reference data $y_i.$ The dynamics depend on a large set of parameters, called weights $w(\kappa)$ and biases $b(\kappa)$. Their values are obtained as a solution to an optimization problem and the typically iterative process is called  training. The objective or cost is given by a distance $\ell$ between  predictions $x_i(L+1)$  and the reference $y_i.$  ResNets have also been formulated  for infinitely many layers, leading to the definition of neural differential equations~\cite{chen2018neural} and to mean-field neural equations~\cite{HTV_MFResNet}. The continuous formulations are subject to  theoretical investigations and may reduce the computational cost of the training, especially in the case when $M$ is large, see e.g.~\cite{Crevat2019,MeiMontanariNguyen2018,SirignanoSpiliopoulos2020a,SirignanoSpiliopoulos2020b,BaccelliTaillefumier2019,TrimbornGersterVisconti2020,HTTV_training,BonnetCiprianiFornasierHui,Osher:controllability,Zuazua:neuralODE}.

In this work, we take a different point of view and model the training process as a controllability  problem. In very particular cases,  the Hilbert Uniqueness Method (HUM) yields the existence of optimal weights. The HUM is a mathematical technique used in the study of partial differential equations that has been applied in control theory of partial differential equations, see e.g.~\cite{lagnese2006hilbert,limaco2022exact,leiva2008controllability,Coron}. Here, we also show the applicability of suitably formulated training problems for ResNet.

The problem of controllability of continuous neural networks has been  discussed for example in \cite{SONTAG1997177,Osher:controllability,Zuazua:neuralODE}. In~\cite{SONTAG1997177}  the controllability of  continuous-time recurrent neural networks has been established provided that the  activation function is the hyperbolic tangent. This work assumes infinitely many layers $L \to \infty$ but still a finite size samples $M<\infty.$

More recently~\cite{Osher:controllability}, the controllability is discussed for $L$ and $M$ tending to infinity, leading to the mean-field  equation. Therein, its controllability using weights that are piecewise constant (in time) has been established. For further results, we refer to the recent review~\cite{Zuazua:neuralODE}. While the controllability can be established here in the case of linear activation functions, we are also interested in their numerical for  general training tasks. To this end,  we propose an approach based on kernel learning methods. In particular, we approximate the ResNets loss function using kernel based estimation. Those methods are commonly used as powerful machine learning tools, see e.g.~\cite{scholkopf2002learning,williams2006gaussian, kanagawa2018gaussian}, and they are supported by a well-defined theory~\cite{steinwart2008support,shawe2004kernel}.


The paper is organized as follows. In Section~\ref{sec:preliminaries} we introduce deep residual neural networks and discuss  the time-continuous and  the mean-field equations. Section~\ref{sec:control_pb}  provides controllability results for the linear ResNets both from a microscopic and mean-field perspective. In this section, we propose also the numerical approach based on kernel learning methods. In Section~\ref{subsec:kernelmethod} we show numerical experiments  of the microscopic and mean-field neural network.

\section{Preliminaries: Neural differential equations and their mean-field limit}\label{sec:preliminaries}

Assume a deep Residual Neural Network (ResNet) with $L$ hidden layers  and $N_\kappa$  neurons in each layer $\kappa$. The indices $\kappa=0$ and $\kappa=L+1$ denote the input layer and the output layer, respectively.

Consider a data-set $\{x_i^0, y_i\}_{i=1}^M$ of size $M\in\N$, $M\gg 1$. Let each input data $x_i^0 \in\R^d$ and output or target data $y_i \in \R^d$ contain  $d\geq 1$  features or measurements.   The state of the $i$-th input data at the $\kappa$-th layer is denoted by $x_i(\kappa)\in\R^{N_\kappa}$, with $x_i(0)=x_i^0$ and $N_0=d$. The final state $x_i(L+1)\in\R^{N_{L+1}}$ is called {\em output} or prediction.

A ResNet propagates each input data $x_i^0$ according to the deterministic dynamics~\cite{He2015DeepRL}:
\begin{equation} \label{eq:resnet}
	x_i(\kappa+1)=A(\kappa)x_i(\kappa)+\Delta t \, \sigma\big( w(\kappa) x_i(\kappa)+b(\kappa) \big), \quad \kappa=0,\dots,L 
\end{equation}
where $\Delta t>0$ indicates a (pseudo)-time step, $w(\kappa) \in\R^{N_{\kappa+1}\times N_\kappa}$ are the weights and $b(\kappa)\in\R^{N_{\kappa+1}}$ the bias which are the parameters of the network. The matrix $A(\kappa) \in R^{N_{\kappa+1}\times N_\kappa}$ will be specified later and the  function $\sigma: \R\to \R$ is the activation function of the neurons  applied component-wise in~\eqref{eq:resnet}. Examples 
are the identity function $\sigma(x) = x$, the rectified linear unit (ReLU) function $\sigma(x) = \max\{0,x\}$, the sigmoid function $\sigma(x) = \frac{1}{1+\exp(-x)}$, the hyperbolic tangent function $\sigma(x) = \tanh(x)$ or the growing cosine unit (GCU) function $\sigma(x)=x\cos(x)$. 
	
In {\em supervised learning} the parameters $w$ and $b$ are chosen by a {\em training process}
 aiming to minimize the distance $\ell:\R^{N_{L+1}}\to\R^+_0$ between given \emph{targets} $\{y_i\}_{i=1}^M$ and the predictions:
\begin{equation} \label{eq:loss}
	\min_{(w(\cdot),b(\cdot))} \frac{1}{M} \sum_{i=1}^M \ell(x_i(L+1)-y_i).
\end{equation}
Several methods have been proposed to solve this optimization problem,  e.g.~stochastic gradient descent~\cite{haber2018look} or ensemble Kalman filter~\cite{Kovachki2019,Watanabe1990,Alper}.

\subsection{Neural differential equations} \label{subsec:nde}

In~\eqref{eq:resnet} the layers define a discrete structure within the ResNet. This granularity has been interpreted as discretization in time of an underlying system of ODEs, known as {\em neural differential equations}.
To recover the time-continuous formulation of~\eqref{eq:resnet},  the following assumption is considered~\cite{chen2018neural}.
	
\begin{assumption} \label{ass:neurons}
	The number of neurons in each layer is given by $N_\kappa = N_0 = d$, $\forall\,\kappa=1,\dots,L+1$.
\end{assumption}
	
The choice $d=1$ is possible and, in addition, networks of this type have already been proved to satisfy different formulations of the universal approximation theorem~\cite{UniversalApproximator,LuLu2020,kidger2020universal}.
%
%
Under Assumption~\ref{ass:neurons} equation \eqref{eq:resnet} is an explicit Euler discretization of
\begin{equation} \label{eq:micro}
	\begin{cases}
	\dot{x}_i(t) = \sigma\big( w(t) x_i(t) + b(t) \big), \quad t\in[0,T]\\
		x_i(t_0) = x_i^0,
	\end{cases}
\end{equation}
for each $i=1,\dots,M$ and time-step $\Delta t$. The system of differential equations~\eqref{eq:micro} describes the time propagation of each measurement $x_i(t)\in\R^d$ and input data $x_i^0\in\R^d$.

In the continuous limit, the parameters of the network are time time-dependent functions, the weights $w(t)\in\R^{d\times d}$, and the bias $b(t)\in\R^d$. For fixed time $T$, then the training process~\eqref{eq:loss} turns into an optimal control problem: 
\begin{equation} \label{eq:lossMicro}
	\min_{(w(t),b(t))} \frac{1}{M} \sum_{i=1}^M \ell(x_i(T)-y_i) \ \mbox{ subject to } \eqref{eq:micro}.
\end{equation}

\subsection{Mean-field neural networks}\label{subsec:mf_nn}

The computational and memory cost of the training of a neural network, increases with the dimension $M$ of the data set. For this reason, an approach based on statistical mechanics leads to  the mean-field limit of~\eqref{eq:micro}  for $M\to \infty$, see~\cite{HTV_MFResNet}. The time-dependent probability measure $\mu_t:=\mu(t,\cdot)\in\mathcal{P}_1(\R^{d})$ fulfills the nonlinear transport equation
\begin{equation} \label{eq:meso}
		\partial_t \mu_t(x) + \nabla_x\cdot \Big(\sigma\big(w(t)x+b(t)\big)\mu_t(x)\Big) = 0, \quad t>0. 
\end{equation}
The initial condition $\mu_0 \in\mathcal{P}_1(\R^{d})$ is obtained as limit (in the sense of measures) of  $\lim_M \frac1M \sum_{i=1}^M \delta(x-x_{i,0})$. The well-posedness of equation~\eqref{eq:meso}, the continuous dependence on $(w,b)$ and the convergence of~\eqref{eq:micro} to~\eqref{eq:meso} in $1$-Wasserstein distance has been established in ~\cite{HTTV_training}  under the following assumptions:
\begin{align*}
	\text{(A1)} & \quad \quad \sigma \in C^{0,1}(\R^d), \ w, b \in C^{0,1}(\R), \\
	\text{(A2)} & \quad \quad |\sigma(x) | \leq C, \ \forall x \in \R^d, \ \text{ for some constant $C>0$}. 
\end{align*}


The training problem \eqref{eq:lossMicro} allows for a mean-field description as an optimal transport problem 
\begin{equation} \label{eq:lossMeso}
    \min_{(w(t),b(t))} \int_{\R^{d}} \tilde{\ell}(x) \mathrm{d}\mu_T(x) \ \mbox{ subject to }
     \eqref{eq:meso}.
\end{equation}
Here, $\tilde{\ell}$ is the mean-field limit of the loss function 
\begin{equation} \label{eq:lossMF}
    \tilde{\ell}(x) = \int_{\R^{d}} \ell(x-y) \de \nu(y),
\end{equation}
where $\nu\in\mathcal{P}_1(\R^d)$ is the probability measure of the target data for $M\to \infty$. The existence of solutions to \eqref{eq:lossMeso} has been established in \cite{HTTV_training}. Note that, the  optimal solution $(w,b)^*(\cdot;M)$ of problem \eqref{eq:lossMicro} depends on the number of particles $M.$ It still an open problem if the optimal solutions $(w,b)^*(\cdot;M)$ also converge to the optimal solution to  problem \eqref{eq:lossMeso} for $M\to \infty.$


\section{Controllability problems}\label{sec:control_pb}

The optimal controls obtained as the solution to 
\eqref{eq:lossMicro} or \eqref{eq:lossMeso}, respectively, depend on the choice of $\ell.$ In this section, we consider a different approach based on controllability. This aims to find a suitable mapping parameterized by $(w,b),$ such that it's possible to map the initial data on the target data. The corresponding definition~\cite{Coron,Zuazua} for linear time-variant systems is given below:



\begin{definition}[Microscopic controllability] \label{def:micro:controllab}
    Let $A(t)\in\mathbb{R}^{Md\times Md}$ and $B(t)\in\mathbb{R}^{Md\times d}$ be time-dependent matrices, and let $x\in C^0 ([t_0,T]; \mathbb{R}^{Md})$ satisfy
	\begin{equation}\label{eq:system_Coron}
		\dot{x}= A(t) x + B(t) u, \quad t\in [t_0,T].
	\end{equation}
	Then, the linear time-varying control system~\eqref{eq:system_Coron} is \textit{controllable} if, for every $(x^0,y)\in \mathbb{R}^{Md} \times \mathbb{R}^{Md}$, there exists a control $u \in L^\infty ((t_0,T); \mathbb{R}^d)$ such that the solution $x(t)$ of the Cauchy problem defined by~\eqref{eq:system_Coron} with initial condition $x(t_0)=x^0$ satisfies $x(T)=y$.
\end{definition}

The system~\eqref{eq:system_Coron} is a particular case of the neural ODEs in the  setting when the activation function $\sigma(x)=x$ is the identity as shown in the following in  Corollary~\ref{corr1}. The controllability in this particular setting relies on the Hilbert Uniqueness Method (HUM),  see e.g.~\cite{Coron}. We first state the general result before applying it to the system~\eqref{eq:micro}.

Let $\mathcal{R} \subset \R^d$ be the reachable set, i.e., the set of all states $y\in \R^{M d}$ such that there exists $u \in L^\infty ((t_0,T); \mathbb{R}^d)$ such that the solution $x$ to the Cauchy problem defined by~\eqref{eq:system_Coron} with initial condition $x(t_0) = 0$ satisfies $x(T)=y$.
For $\lambda^T$ given in $\mathbb{R}^{Md}$, we consider the solution $\lambda: [t_0,T]\rightarrow\mathbb{R}^{Md}$ of the following adjoint  problem:
\begin{equation}\label{eq:Cauchy_pb_back}
	\dot{\lambda}= -A(t)^{tr} \lambda, \quad \lambda(T) = \lambda^T, \ t \in [t_0,T].
\end{equation}
Then, the following holds true:
\begin{proposition}\label{prop:coron}
	Let $u \in L^2 ((t_0,T); \mathbb{R}^d)$. Let $x: [t_0,T]\rightarrow\mathbb{R}^{Md}$ be the solution of the Cauchy problem defined by~\eqref{eq:system_Coron} with initial condition $x(t_0) = 0$. Let $\lambda^T\in \mathbb{R}^{Md}$ and let $\lambda: [t_0,T]\rightarrow\mathbb{R}^{Md}$ be the solution of the Cauchy problem \eqref{eq:Cauchy_pb_back}. Then
\begin{equation*}
x(T)\cdot\lambda^T = \int_{t_0}^{T} u(t)\cdot B(t)^{tr} \lambda(t) dt.
\end{equation*}
\end{proposition}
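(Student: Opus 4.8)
The plan is to exploit the natural duality between the state equation \eqref{eq:system_Coron} and its adjoint \eqref{eq:Cauchy_pb_back} by differentiating the scalar pairing $t\mapsto x(t)\cdot\lambda(t)$ and integrating over $[t_0,T]$. This is the elementary computation underpinning the Hilbert Uniqueness Method, and it converts the boundary data $x(t_0)=0$ and $\lambda(T)=\lambda^T$ directly into the asserted identity.

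First I would record the regularity needed to legitimize the computation. The adjoint problem \eqref{eq:Cauchy_pb_back} is a linear ODE with bounded coefficients, so $\lambda$ is continuously differentiable on $[t_0,T]$. The state $x$ solves \eqref{eq:system_Coron} with $u\in L^2$, hence $\dot x\in L^2((t_0,T);\mathbb{R}^{Md})$ and $x$ is absolutely continuous. Consequently the product $x\cdot\lambda$ is absolutely continuous on $[t_0,T]$, the product rule holds for almost every $t$, and the fundamental theorem of calculus is applicable.

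Next I would compute, for almost every $t\in[t_0,T]$,
\[
\frac{d}{dt}\big(x(t)\cdot\lambda(t)\big) = \dot x(t)\cdot\lambda(t) + x(t)\cdot\dot\lambda(t).
\]
Substituting $\dot x = A(t)x + B(t)u$ and $\dot\lambda = -A(t)^{tr}\lambda$, and using the defining property of the transpose $\big(A(t)x\big)\cdot\lambda = x\cdot\big(A(t)^{tr}\lambda\big)$, the two terms carrying $A(t)$ cancel, so that
\[
\frac{d}{dt}\big(x(t)\cdot\lambda(t)\big) = \big(B(t)u(t)\big)\cdot\lambda(t) = u(t)\cdot\big(B(t)^{tr}\lambda(t)\big).
\]

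Finally I would integrate this identity from $t_0$ to $T$ and invoke the boundary data: $x(t_0)=0$ annihilates the lower endpoint while $\lambda(T)=\lambda^T$ supplies the upper one, giving $x(T)\cdot\lambda^T = \int_{t_0}^{T} u(t)\cdot B(t)^{tr}\lambda(t)\,dt$, which is the claim. The computation is routine; the only point demanding care, and the closest thing to an obstacle, is justifying the fundamental theorem of calculus for $x\cdot\lambda$ under the low regularity $u\in L^2$. This is settled by the absolute continuity remark above, and the right-hand integral is finite since $u\in L^2\subset L^1$ on the bounded interval while $B(t)^{tr}\lambda(t)$ is bounded.
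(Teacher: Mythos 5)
Your proof is correct and follows essentially the same route the paper takes: the paper quotes this result from Coron without writing out the computation, but its proof of the analogous mean-field statement (Proposition~\ref{prop:flow}) is exactly your argument of differentiating the pairing $x(t)\cdot\lambda(t)$, cancelling the $A(t)$ terms via the transpose, and integrating with the boundary data $x(t_0)=0$, $\lambda(T)=\lambda^T$. Your added remarks on absolute continuity under $u\in L^2$ are a welcome bit of extra care that the paper omits.
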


The proposition allows to define a closed system in terms of $(x,\lambda)$: let $\Gamma$ be the map $\Gamma : \lambda^T\in\mathbb{R}^{Md} \mapsto x(T) \in \mathbb{R}^{Md}$, where $x$ is the solution of the Cauchy problem
\begin{equation*}
	\dot{x} = A(t)x + B(t)\bar{u}(t), \quad x(t_0) = 0, \quad \bar{u}(t):= B(t)^{tr} \lambda(t).
\end{equation*}
The latter now allows to state a controllability result, see \cite{Coron}. 
\begin{theorem}\label{thm:coron}
Under the assumptions of Proposition~\ref{prop:coron} the time-varying system \eqref{eq:system_Coron} is controllable, i.e., 
$$\mathcal{R}= \Gamma (\mathbb{R}^{Md}).$$
\end{theorem}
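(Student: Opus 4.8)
The plan is to establish the set equality $\mathcal{R}=\Gamma(\mathbb{R}^{Md})$ by proving the two inclusions separately, since one is immediate and the other carries the whole content of the Hilbert Uniqueness Method. First I would dispose of $\Gamma(\mathbb{R}^{Md})\subseteq\mathcal{R}$: for any $\lambda^T$ the state $\Gamma(\lambda^T)=x(T)$ is by construction the terminal value of~\eqref{eq:system_Coron} driven from $x(t_0)=0$ by $\bar u(t)=B(t)^{tr}\lambda(t)$. Since $\lambda$ solves the linear adjoint system~\eqref{eq:Cauchy_pb_back} it is continuous, so $\bar u\in L^\infty((t_0,T);\mathbb{R}^d)$ is an admissible control and $\Gamma(\lambda^T)$ is therefore reachable.

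For the reverse inclusion the key structural observation I would record first is that $\Gamma$ is a symmetric, positive semi-definite linear map --- it is precisely the controllability Gramian. Concretely, inserting $u=\bar u=B^{tr}\lambda$ into Proposition~\ref{prop:coron} gives the quadratic identity
\[
\lambda^T\cdot\Gamma(\lambda^T)=\int_{t_0}^{T}|B(t)^{tr}\lambda(t)|^2\,dt\ \geq 0,
\]
which both exhibits the semi-definiteness and shows that $\lambda^T\in\ker\Gamma$ if and only if $B(t)^{tr}\lambda(t)=0$ for a.e.\ $t\in[t_0,T]$. Because $\Gamma$ is self-adjoint we then have the finite-dimensional decomposition $\operatorname{Range}(\Gamma)=(\ker\Gamma)^\perp$, so it suffices to show that every reachable state is orthogonal to $\ker\Gamma$.

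To close the argument I would take $y\in\mathcal{R}$, realized as $y=x(T)$ from $x(t_0)=0$ by some $u\in L^\infty\subseteq L^2$, together with any $\lambda^T\in\ker\Gamma$ and its associated adjoint trajectory $\lambda$. Proposition~\ref{prop:coron} then yields
\[
y\cdot\lambda^T=\int_{t_0}^{T}u(t)\cdot B(t)^{tr}\lambda(t)\,dt=0,
\]
since $B^{tr}\lambda$ vanishes almost everywhere. Thus $y\perp\ker\Gamma$, i.e.\ $y\in(\ker\Gamma)^\perp=\operatorname{Range}(\Gamma)=\Gamma(\mathbb{R}^{Md})$, giving $\mathcal{R}\subseteq\Gamma(\mathbb{R}^{Md})$ and hence the claimed equality. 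The hard part is this reverse inclusion, and it rests entirely on the self-adjointness of $\Gamma$: without the identification $\operatorname{Range}(\Gamma)=(\ker\Gamma)^\perp$ the orthogonality $y\perp\ker\Gamma$ alone would not place $y$ in the range. Finally I would remark that the equality upgrades to genuine controllability $\mathcal{R}=\mathbb{R}^{Md}$ exactly when $\Gamma$ is invertible, equivalently when $B(t)^{tr}\lambda(t)\equiv 0$ forces $\lambda^T=0$.
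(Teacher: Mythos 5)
Your proof is correct and is essentially the standard Hilbert Uniqueness Method argument: the paper itself gives no proof of Theorem~\ref{thm:coron}, deferring to the cited reference, and the argument there is exactly yours --- the easy inclusion via the admissible control $\bar u=B^{tr}\lambda$, the identification of $\Gamma$ as the symmetric positive semi-definite Gramian through the duality identity of Proposition~\ref{prop:coron}, and the reverse inclusion via $\operatorname{Range}(\Gamma)=(\ker\Gamma)^\perp$. The only point worth spelling out is the symmetry of $\Gamma$, which you assert but which follows from the polarized form of the same identity, namely $\Gamma(\lambda^T)\cdot\mu^T=\int_{t_0}^{T}B(t)^{tr}\lambda(t)\cdot B(t)^{tr}\mu(t)\,dt$.
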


Due to the linearity of the system \eqref{eq:system_Coron},   Proposition \ref{prop:coron} and Theorem \ref{thm:coron} generalize to the general initial condition $x(t_0) = x^0$.


By applying Theorem~\ref{thm:coron} to the linear system of neural ODEs, we achieve the exact controllability of the system to an arbitrary final state, namely $x(T)= y$, using the bias $b(t)$ as control which can be explicitly computed.

\begin{corollary}\label{corr1}
Consider the neural ODE system \eqref{eq:micro} with $\sigma(x)=x$. Then, for any weights $w(t) \in L^\infty(0,T; \R^d)$  the system is controllable in the sense of Definition~\ref{def:micro:controllab} for every $(x_0,y) \in \R^{M d} \times \R^{M d}$. Further, the  control is given only by the bias $b \in L^\infty(0,T; \R^d).$ 
\end{corollary}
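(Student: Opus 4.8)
I plan to prove Corollary~\ref{corr1} by casting the linear neural ODE into the canonical form~\eqref{eq:system_Coron} and then invoking the Hilbert Uniqueness Method through Theorem~\ref{thm:coron}. First I would set $\sigma(x)=x$ in~\eqref{eq:micro}, so that each trajectory obeys the affine dynamics $\dot{x}_i(t)=w(t)x_i(t)+b(t)$. Stacking the particles into the single state $x=(x_1,\dots,x_M)^{tr}\in\R^{Md}$ turns this into $\dot{x}=A(t)x+B(t)u$ with the block-diagonal generator $A(t)=I_M\otimes w(t)$, the control $u=b$, and an input matrix $B(t)$ that injects the bias into each component of the state. Since $w,b\in L^\infty(0,T)$, the matrices $A(t)$ and $B(t)$ are essentially bounded, the Cauchy problem is well posed, its solution lies in $C^0([t_0,T];\R^{Md})$, and the hypotheses of Proposition~\ref{prop:coron} and Theorem~\ref{thm:coron} are met. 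This is the routine reduction step.

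The substantive part is to identify the map $\Gamma$ explicitly and reduce controllability to an invertibility statement. Let $\Phi(t,s)$ denote the state-transition matrix of $\dot z=A(t)z$. The adjoint equation~\eqref{eq:Cauchy_pb_back} is solved by $\lambda(t)=\Phi(T,t)^{tr}\lambda^T$, using the standard identity $\tfrac{\de}{\de t}\Phi(T,t)^{tr}=-A(t)^{tr}\Phi(T,t)^{tr}$. Substituting $\bar u(t)=B(t)^{tr}\lambda(t)$ into the Duhamel representation of the solution starting from $x(t_0)=0$ gives
\begin{equation*}
\Gamma(\lambda^T)=x(T)=\left(\int_{t_0}^{T}\Phi(T,t)\,B(t)\,B(t)^{tr}\,\Phi(T,t)^{tr}\,\de t\right)\lambda^T =: G\,\lambda^T,
\end{equation*}
so $\Gamma$ coincides with the controllability Gramian $G$. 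By Theorem~\ref{thm:coron} the reachable set is $\mathcal R=\Gamma(\R^{Md})=G(\R^{Md})$, hence exact controllability to every target is equivalent to surjectivity of $\Gamma$, i.e.\ to invertibility of the symmetric positive-semidefinite matrix $G$.

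The main obstacle is therefore the invertibility of $G$, which I would establish through the associated observability (Kalman-type rank) condition. Because $G$ is positive semidefinite, it fails to be invertible only if there is a nonzero $\lambda^T$ with $B(t)^{tr}\Phi(T,t)^{tr}\lambda^T=0$ for almost every $t\in[t_0,T]$. The transition matrix $\Phi(T,t)$ is invertible for every $t$ (a consequence of $A\in L^\infty$), so $\Phi(T,t)^{tr}\lambda^T\neq 0$; combined with the full row rank of the bias-input matrix $B$, this forces the integrand to be nonzero on a set of positive measure, contradicting $\lambda^T\neq0$. Hence $G$ is positive definite and $\Gamma$ is surjective, giving $\mathcal R=\R^{Md}$. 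I expect this observability verification to be where care is needed, since it is exactly the structural interplay between $B$ and $\Phi$ that encodes controllability.

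Finally I would remove the normalization $x(t_0)=0$ and exhibit the control. By linearity, steering $x^0$ to $y$ is equivalent to steering $0$ to $y-\Phi(T,t_0)x^0$, as already noted after Theorem~\ref{thm:coron}. Setting $\lambda^T=G^{-1}\big(y-\Phi(T,t_0)x^0\big)$ and propagating it backwards through~\eqref{eq:Cauchy_pb_back} yields the explicit bias control
\begin{equation*}
b(t)=u(t)=B(t)^{tr}\,\Phi(T,t)^{tr}\,G^{-1}\big(y-\Phi(T,t_0)x^0\big),
\end{equation*}
which belongs to $L^\infty(0,T;\R^d)$ because $w,b\in L^\infty$ render $\Phi$ (and thus the integrand) essentially bounded. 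This verifies Definition~\ref{def:micro:controllab} for arbitrary $(x^0,y)\in\R^{Md}\times\R^{Md}$ using only the bias as control, completing the argument.
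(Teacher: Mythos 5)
Your reduction to the form \eqref{eq:system_Coron} with $A(t)=I_{M}\otimes w(t)$ and $B(t)=\mathbbm{1}_{M\times 1}\otimes I_{d\times d}$, and the identification of $\Gamma$ with the Gramian $G=\int_{t_0}^T\Phi(T,t)B(t)B(t)^{tr}\Phi(T,t)^{tr}\,dt$, match the paper's setup (the paper stops at ``Theorem~\ref{thm:coron} applies'' and reads off $b(t)=B(t)^{tr}\lambda(t)=\sum_i\lambda_i(t)$). The genuine gap is in your invertibility argument for $G$, and it is fatal for $M\ge 2$. You argue that $\Phi(T,t)^{tr}\lambda^T\ne 0$ together with ``full row rank of $B$'' forces $B(t)^{tr}\Phi(T,t)^{tr}\lambda^T\ne 0$. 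But $B(t)^{tr}\in\R^{d\times Md}$ is a wide matrix: full row rank makes it surjective, not injective, and its kernel has dimension $(M-1)d$. Concretely, $\Phi(T,t)=I_M\otimes\phi(T,t)$ with $\phi$ the $d\times d$ transition matrix of $\dot z=w(t)z$, so
\begin{equation*}
B(t)^{tr}\Phi(T,t)^{tr}\lambda^T=\phi(T,t)^{tr}\sum_{i=1}^M\lambda_i^T,
\end{equation*}
which vanishes identically in $t$ for every nonzero $\lambda^T$ with $\sum_i\lambda_i^T=0$. Hence $G$ has rank $d$, not $Md$, and the reachable set from the origin is only the diagonal $\{\mathbbm{1}_{M\times 1}\otimes v:\ v\in\R^d\}$. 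This is visible directly from the dynamics: the same bias $b(t)$ is applied to every particle, so each difference $x_i-x_j$ obeys the uncontrolled equation $\frac{d}{dt}(x_i-x_j)=w(t)(x_i-x_j)$, and two particles with equal initial data can never be steered to distinct targets.

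To be fair, this is not a defect you introduced: the statement as written (controllability for every $(x^0,y)\in\R^{Md}\times\R^{Md}$ with a single shared bias) cannot hold for $M\ge 2$, and the paper's own proof does not address surjectivity of $\Gamma$ either --- Theorem~\ref{thm:coron} only asserts $\mathcal R=\Gamma(\R^{Md})$, which here is a proper subspace. Your write-up is in fact more honest, since it isolates the exact step (positive definiteness of $G$) on which everything hinges; it just asserts that step incorrectly. A corrected version of your argument does prove the result in the one regime where it is true, namely $M=1$ (equivalently, controllability of a single common target), where $B^{tr}=I_d$ is injective and your observability argument goes through verbatim.
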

\begin{proof}
\smartqed
Since $\sigma(x)=x$, we note that $x(t)-x^0$ fulfills a time-variant linear system of the type \eqref{eq:micro} with initial condition $z(0)=0\in\R^{M d}.$
Then, equation \eqref{eq:micro} for $i=1,\dots,M$ is given by  system \eqref{eq:system_Coron} using the following definitions
\begin{align*}
\R^{Md\times Md}\ni& \quad A(t) =I_{M\times M} \otimes w(t),\\
\R^{Md\times d}\ni& \quad  B(t) = \mathbbm{1}_{M\times1} \otimes I_{d\times d}.
\end{align*}
Then, Theorem \ref{thm:coron} applies. Note that using the adjoint equation 
\begin{equation*}
	\dot{\lambda}= -A(t)^{tr} \lambda, \quad \lambda(T) = \lambda^T, 
\end{equation*}
which  fulfills 
$	x(T)\cdot\lambda^T = \int_{t_0}^{T} b(t) \cdot \sum_{i=1}^M \lambda_i(t) dt,
$
allows to characterize the optimal bias by
\begin{equation*}
	{b}(t)= \sum_{i=1}^M \lambda_i(t).
\end{equation*}
This concludes the proof.
\end{proof}

The results can be extended to the mean-field description of the controllability problem.
Here, we exploit the fact that the probability measure $\mu$ can be characterized by a push-forward mapping of the initial probability distribution. This will allow to apply 
controllability results from ODE theory to achieve mean-field controllability. The latter is defined in the following definition:

\begin{definition}[Mean-field controllability] \label{def:meso:controllab}
	Let $\mu_0,\nu\in\mathcal{P}_1(\R^d)$ be given. Let $T>0$ be fixed. We say that the mean-field equation~\eqref{eq:meso} is controllable if there exist $w \in C^{0,1}([0,T];\R^{d\times d})$ and $b \in C^{0,1}([0,T];\R^{d})$ such that $(\Phi_T\# \mu_0)=\nu$ where $\Phi_t:\R^d\to\R^d$ is the Lipschitz continuous characteristic flow of~\eqref{eq:meso}.
\end{definition}

As seen in the previous results, we expect controllability in the case of an activation  function that is the identiy. Next, we define the  flow in this particular case. Let  $\Phi:(t,x)\in\R^{1+d}\mapsto \Phi_t(x):=\Phi(t,x)\in\R^{d}$  satisfies
\begin{equation}\label{eq:flow}
	\begin{cases}
    	\partial_t \Phi(t,x) = w(t) \Phi(t,x) + b(t),  \\
		\Phi(t_0,x)  = x.
	\end{cases}
\end{equation}
The flow $\Phi_t$ is a weak solution to the neural mean-field equation~\eqref{eq:meso} with $\sigma(x)=x$, as
\[
\mu_t = \Phi_t  \symbol{35}  \mu_0,
\]
see~\cite{HTTV_training} for further details. Following, the discussion in the previous section, we consider the adjoint flow $\Lambda$ to 
\eqref{eq:flow} 
\begin{equation}\label{eq:flow_adj}
		\begin{cases}
			\partial_t \Lambda(t,x) = - \bigl(w(t)\bigr)^{tr} \Lambda(t,x),  \\
			\Lambda(T,x)  = \Lambda^T(x),
		\end{cases}
\end{equation}
where $\Lambda: [t_0,T] \times \R^{d} \rightarrow \R^d$.

Note that both the flow and the adjoint are now depending on the point $x \in \R^d$ and are therefore parameterized by $x.$ Also, the reachable set $\mathcal{R}_\Phi$ is now parameterized by $x$. Provided we have integrability with respect to the parameter $x$, we establish the following realation between flow and its adjoint, similar to Proposition \ref{prop:coron}. 
\begin{proposition}\label{prop:flow}
	Let $b \in L^2 ((t_0,T); \mathbb{R}^d)$ and $\Phi: [t_0,T]\times \R^d\rightarrow\mathbb{R}^{d}$ be the solution of \eqref{eq:flow}. Let $\Lambda^T(x)\in \mathbb{R}^{d}$ and $\Lambda: [t_0,T] \times \R^{d} \rightarrow \R^d$ be the solution of \eqref{eq:flow_adj}. Assume $\Phi(t,\cdot), \Lambda(t,\cdot) \in L^2(\R^d)$ a.e. in $t,$ then
	\begin{equation}\label{eq:coron-flow}
		\langle \, \Phi(T,x), \, \Lambda^T(x) \, \rangle_{L^2} = \int_{t_0}^{T}  \langle \, b(t), \, \Lambda(t,x) \, \rangle_{L^2}  \, dt.
	\end{equation}
\end{proposition}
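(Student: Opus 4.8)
The plan is to lift the duality identity behind Proposition~\ref{prop:coron} from the finite-dimensional Euclidean pairing to the $L^2(\R^d)$ inner product in the space variable $x$. Concretely, I would introduce the scalar function
\[
g(t) := \langle\, \Phi(t,\cdot),\, \Lambda(t,\cdot)\,\rangle_{L^2} = \int_{\R^d} \Phi(t,x)\cdot\Lambda(t,x)\,\de x,
\]
where $\cdot$ denotes the Euclidean dot product on $\R^d$, and reduce the claimed identity \eqref{eq:coron-flow} to the boundary-to-boundary balance $g(T)-g(t_0) = \int_{t_0}^T \langle b(t),\Lambda(t,\cdot)\rangle_{L^2}\,\de t$. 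Thus it suffices to establish that $g'(t) = \langle b(t),\Lambda(t,\cdot)\rangle_{L^2}$ and then integrate over $[t_0,T]$.

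For the derivative I would differentiate under the integral sign and substitute the flow equation \eqref{eq:flow} and the adjoint equation \eqref{eq:flow_adj}, obtaining
\[
g'(t) = \int_{\R^d}\Big[\big(w(t)\Phi(t,x)+b(t)\big)\cdot\Lambda(t,x) - \Phi(t,x)\cdot(w(t))^{tr}\Lambda(t,x)\Big]\,\de x.
\]
The decisive algebraic step is the pointwise transpose identity
\[
\big(w(t)\Phi(t,x)\big)\cdot\Lambda(t,x) = \Phi(t,x)\cdot\big((w(t))^{tr}\Lambda(t,x)\big),
\]
valid for every $(t,x)$, so the two $w$-terms annihilate each other and only the bias contribution survives, leaving $g'(t) = \int_{\R^d} b(t)\cdot\Lambda(t,x)\,\de x = \langle b(t),\Lambda(t,\cdot)\rangle_{L^2}$. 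Integrating from $t_0$ to $T$ and inserting the terminal condition $\Lambda(T,\cdot)=\Lambda^T$ from \eqref{eq:flow_adj} identifies $g(T)$ with the left-hand side of \eqref{eq:coron-flow}.

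It remains to dispose of the boundary term $g(t_0)$. Here I would observe that the standing $L^2$ hypothesis $\Phi(t,\cdot)\in L^2(\R^d)$ is itself incompatible with the identity initial datum $\Phi(t_0,x)=x$ of \eqref{eq:flow}, since $x\mapsto x\notin L^2(\R^d)$; consequently the proposition is to be understood for the reachable-set flow, built from the zero initial state exactly as \eqref{eq:system_Coron} is taken with $x(t_0)=0$ in Proposition~\ref{prop:coron} and as signalled by the parametrized reachable set $\mathcal{R}_\Phi$. With $\Phi(t_0,\cdot)=0$ one has $g(t_0)=0$, which closes the identity.

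I expect the main obstacle to be analytic rather than algebraic: rigorously justifying the interchange of $\partial_t$ and $\int_{\R^d}\de x$. This needs the integrability hypotheses $\Phi(t,\cdot),\Lambda(t,\cdot)\in L^2(\R^d)$ a.e.\ in $t$ together with a dominated-convergence bound ensuring that $\partial_t\big(\Phi\cdot\Lambda\big)$ is controlled uniformly in $t$ on compact subintervals, which I would derive from the $C^{0,1}$ regularity of $w$ and $b$ and the explicit linear structure of \eqref{eq:flow}--\eqref{eq:flow_adj}. A secondary point, settled by Cauchy--Schwarz from the same $L^2$ assumptions, is checking that $\Phi(t,\cdot)\cdot\Lambda(t,\cdot)\in L^1(\R^d)$ so that $g(t)$ and all the pairings appearing above are finite.
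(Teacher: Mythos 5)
Your proposal follows essentially the same route as the paper's proof: both express the $L^2$ pairing $\langle\Phi(t,\cdot),\Lambda(t,\cdot)\rangle_{L^2}$ as a function of time, differentiate under the integral, substitute \eqref{eq:flow} and \eqref{eq:flow_adj}, and cancel the two $w$-terms via the transpose identity so that only the bias contribution survives. Your explicit handling of the boundary term at $t_0$ --- observing that the identity datum $\Phi(t_0,x)=x$ is incompatible with $\Phi(t,\cdot)\in L^2(\R^d)$ and that the identity must therefore be read for the zero-initial-state flow, as in Proposition~\ref{prop:coron} --- addresses a point the paper's proof passes over silently when it writes $\langle\Phi(T,\cdot),\Lambda^T\rangle_{L^2}=\int_{t_0}^{T}\frac{d}{dt}\langle\Phi(t,\cdot),\Lambda(t,\cdot)\rangle_{L^2}\,dt$ without accounting for the value at $t_0$.
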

\begin{proof} The relation is obtained by direct integration
    \begin{equation*}
    \begin{split}
    \langle \, \Phi(T,x), \, \Lambda^T(x) \, \rangle_{L^2}
    =& \int_{t_0}^{T}  \frac{d}{dt} \langle \, \Phi(t,x), \, \Lambda(t,x) \, \rangle_{L^2}  \, dt  \\
    =& \int_{t_0}^{T}  \langle \, w(t) \Phi(t,x) + b(t), \, \Lambda(t,x) \, \rangle_{L^2} \, \\
    &- \, \langle \, \Phi(t,x), \, \bigl(w(t)\bigr)^{tr} \Lambda(t,x) \, \rangle_{L^2}   \, dt     \\
    =& \int_{t_0}^{T}  \langle \, b(t), \, \Lambda(t,x) \, \rangle_{L^2}  \, dt.
    \end{split}
	\end{equation*}
 ~
\end{proof}

Using the adjoint we define  $\Gamma_\Phi$ as a the following map, i.e.
\begin{equation*}
	\begin{split}
		\Gamma_\Phi : L^2(\R^{d}; \R^{d})  &\rightarrow C^{0,1}(\R^{d}; \R^{d})  ,  \\
		\Lambda^T(x) &\mapsto \Phi(T, x),
	\end{split}
\end{equation*}
where $\Phi$ is the solution of \eqref{eq:flow}
and
\begin{equation}\label{eq:control_flow}
	b(t)=\bar{b}(t,x):= \Lambda(t,x).
\end{equation}

The following Theorem \ref{thm:flow} then extends Theorem \ref{thm:coron} and allows to to characterize $\mathcal{R}_\Phi$. The proof follows by realizing that the system is only a parameterized (by $x$) version of Theorem \ref{thm:coron}.
\begin{theorem}\label{thm:flow}
Under the assumptions of Proposition \ref{prop:flow}, we have	$$\mathcal{R}_\Phi= \Gamma_\Phi (\mathbb{R}^{d}).$$
\end{theorem}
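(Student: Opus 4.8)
The plan is to take literally the remark that, for each fixed parameter $x\in\R^d$, the flow system \eqref{eq:flow} is a $d$-dimensional instance of the linear control system \eqref{eq:system_Coron} already handled in Theorem~\ref{thm:coron}. First I would record the identification: reading $\Phi(\cdot,x)\in\R^d$ as the state, equation \eqref{eq:flow} is \eqref{eq:system_Coron} with the choices $A(t)=w(t)$, $B(t)=I_{d\times d}$ and control $u=b$, in the single-particle case $M=1$ and ambient dimension $d$. The only mismatch is the nonzero initial datum $\Phi(t_0,x)=x$, which I would dispose of exactly as in the proof of Corollary~\ref{corr1}: by linearity the difference between $\Phi(\cdot,x)$ and the free (uncontrolled) flow solves the same system from the origin, so it is enough to characterize reachability from $0$ for each $x$.

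Next I would note that Proposition~\ref{prop:flow} is precisely the $L^2(\R^d)$-valued analogue of the duality identity of Proposition~\ref{prop:coron}: pairing the terminal state $\Phi(T,x)$ against an arbitrary adjoint terminal datum $\Lambda^T$ is re-expressed through the control $b$ and the adjoint $\Lambda$ of \eqref{eq:flow_adj}. Combined with the definition of $\Gamma_\Phi$ through the HUM control $b(t)=\bar b(t,x)=\Lambda(t,x)$ in \eqref{eq:control_flow}, this reproduces the Gramian structure underlying Theorem~\ref{thm:coron}: $\Gamma_\Phi$ plays the role of $\Gamma=LL^{*}$, where $L$ is the control-to-terminal-state map. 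The inclusion $\Gamma_\Phi(\cdot)\subseteq\mathcal{R}_\Phi$ is then immediate, because $b=\Lambda$ is an admissible control; the substance of the theorem is the reverse inclusion, namely that any terminal state reachable by some control is already reachable by a HUM control.

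For the reverse inclusion I would argue pointwise in the parameter $x$. For each fixed $x$, Theorem~\ref{thm:coron} applies verbatim to the $d$-dimensional system above and yields the identity $\mathcal{R}_\Phi=\Gamma_\Phi(\cdot)$ at the point $x$; here the finite-dimensional fact $\operatorname{Range}(LL^{*})=\operatorname{Range}(L)$ is automatic, since ranges of linear maps between finite-dimensional spaces are closed. Collecting these identities over all $x\in\R^d$, and invoking the $L^2(\R^d)$-integrability assumed in Proposition~\ref{prop:flow} to ensure that the assembled objects $\Phi(T,\cdot)$ and $\Lambda^T(\cdot)$ belong to the stated spaces, produces the claimed characterization $\mathcal{R}_\Phi=\Gamma_\Phi\big(L^2(\R^d;\R^d)\big)$.

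The main obstacle I anticipate is the honest passage from this pointwise statement to the global $L^2$ one. In a genuinely infinite-dimensional $L^2(\R^d)$ formulation the identity $\operatorname{Range}(LL^{*})=\operatorname{Range}(L)$ can fail, because the range of $L$ need not be closed, so the finite-dimensional HUM argument of Theorem~\ref{thm:coron} may not be invoked directly at the level of function spaces. The reason the proof nevertheless goes through is the decoupling already visible in \eqref{eq:control_flow}: the control $b(t)=\Lambda(t,x)$ is permitted to depend on $x$, so the family of flows separates across the parameter and each slice is a solvable finite-dimensional problem, the $L^2$ structure serving only to bundle the pointwise solutions. Making this decoupling precise, and checking that it is compatible with the mapping property $\Gamma_\Phi:L^2(\R^d;\R^d)\to C^{0,1}(\R^d;\R^d)$ built into the definition of $\Gamma_\Phi$, is where I would spend the care.
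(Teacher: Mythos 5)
Your proposal takes essentially the same route as the paper, whose entire proof consists of the remark that the system is ``only a parameterized (by $x$) version of Theorem~\ref{thm:coron}'': you fix $x$, identify \eqref{eq:flow} with \eqref{eq:system_Coron} via $A(t)=w(t)$, $B(t)=I_{d\times d}$, $M=1$, reduce to zero initial data by linearity, and apply the finite-dimensional HUM result slice-wise. In fact your write-up is more careful than the paper's, since you explicitly flag (and resolve, via the decoupling across $x$) the point that $\operatorname{Range}(LL^{*})=\operatorname{Range}(L)$ is only automatic in finite dimensions, a subtlety the paper passes over in silence.
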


Hence, the HUM method applies to the problem of controllability of equation \eqref{eq:flow} for fixed $x.$ This leads to a control  $b$ that may depend on  $x$ (and also eventually on the desired state $y$). In other words, the application 
of the previous Corollarly \eqref{corr1} will lead us to  $b = b(t,x,y)$. Therefore, in order to obtain a suitable mean-field corresponding to the flow
\begin{equation}\label{eq:flow:new}
	\begin{cases}
    	\partial_t \Phi(t,x) = w(t) \Phi(t,x) + b(t,x,y),  \\
		\Phi(t_0,x)  = x
	\end{cases}
\end{equation}
we reformulate it as an autonomous system with respect to $x,y$. To this end, we introduce auxiliary variables $x_1(t):=\Phi(t,x), \, x_2(t), x_3(t)  \in \R^d.$ This leads to the \emph{extended} flow
\begin{equation}\label{eq:flow_aut}
	\begin{cases}
		 \dot{x}_1(t) = w(t) x_1(t) + b(t,x_2(t),x_3(t)),  & x_1(0) = x,\\
		 \dot{x}_2(t) = 0, & x_2(0) = x,\\
		 \dot{x}_3(t) = 0,  & x_3(0) = y.\\
	\end{cases}
\end{equation}

To simplify the presentation, let us denote by $\xi(t,x_1,x_2,x_3) := (x_1,x_2,x_3) $ with  $x_1(t):=\Phi(t,x)$ and the right hand side of \eqref{eq:flow_aut} as

\begin{equation*}
	\begin{aligned}
		G_t: \mathbb{R}^{3d} &\to \mathbb{R}^{3d}, \; 
		(x_1,x_2,x_3) &\mapsto G_t	(x_1,x_2,x_3) := \partial_t \xi(t,x_1,x_2,x_3),
			\end{aligned}
\end{equation*}
and
\begin{equation}\label{eq:flow_xi}
	\begin{cases}
		\partial_t \xi(t,x_1,x_2,x_3) = \Big( w(t) x_1 + b(t,x_2,x_3), \, 0, \, 0 \Big)^{tr},\\
		\xi(t_0,x_1,x_2,x_3) =  \Big( x, \, x, \, y \Big)^{tr}.
	\end{cases}
\end{equation}

The push-forward of a measure $\bar{\mu}_0$ with the flow $\xi_t$ defines  a  measure  $\bar \mu_t$, belonging to the set $\mathcal{P}_1(\R^{3d})$, and satisfying in the weak sense, the mean-field equation
\begin{equation}\label{eq:bar_mu}
\partial_t \bar{\mu}_t(x_1,x_2,x_3) + \nabla_{x_1}\cdot \Big( \left(w(t)x_1+b(t,x_2,x_3) \right) \bar{\mu}_t(x_1,x_2,x_3) \Big) = 0.
\end{equation}

The following proposition, shows that the empirical measure \eqref{emp}  is  a  solution of our new mean-field PDE \eqref{eq:bar_mu} in a weak sense. 

\begin{proposition}\label{ex:dirac}
   The empirical measure $\bar{\mu}^M$ defined by 
\begin{equation}\label{emp} 
\bar{\mu}^M(t,x_1,x_2,x_3)=\frac1M\sum_{i=1}^M\delta(x_1-x_i(t)| x_i^0,y_i) \delta(x_2 - x_i^0)  \delta(x_3 - y_i),\end{equation}
    where 
    \begin{equation} \label{eq:micro_linear}
	\begin{cases}
		\dot{x}_i(t) = w(t) x_i(t) + b(t,x_i^0,y_i), \\
		x_i(t_0) = x_i^0, \quad i = 1,\dots,M
	\end{cases}
\end{equation}
is a  solution of the mean-field equation \eqref{eq:bar_mu} in the distributional sense. 
\end{proposition}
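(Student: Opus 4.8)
The plan is to verify directly that the empirical measure $\bar\mu^M$ defined in \eqref{emp} satisfies the weak (distributional) formulation of \eqref{eq:bar_mu}. First I would write down what "weak solution" means here: for every test function $\varphi \in C_c^\infty(\R^{3d})$ (or $C_c^1$), the measure must satisfy
\begin{equation*}
\frac{d}{dt}\int_{\R^{3d}} \varphi\, \de\bar\mu_t = \int_{\R^{3d}} \big(w(t)x_1 + b(t,x_2,x_3)\big)\cdot \nabla_{x_1}\varphi \, \de\bar\mu_t,
\end{equation*}
which is the integrated form obtained by pairing \eqref{eq:bar_mu} with $\varphi$ and moving the divergence onto the test function (the $x_2,x_3$ components of the vector field vanish, so only the $\nabla_{x_1}$ term survives).

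Next I would substitute the empirical ansatz. Pairing $\bar\mu^M$ against $\varphi$ collapses the integral to a finite sum,
\begin{equation*}
\int_{\R^{3d}} \varphi\, \de\bar\mu^M_t = \frac{1}{M}\sum_{i=1}^M \varphi\big(x_i(t),\, x_i^0,\, y_i\big),
\end{equation*}
since the Dirac masses in $x_2,x_3$ fix those slots at $x_i^0$ and $y_i$, while the $x_1$-slot is fixed at $x_i(t)$. Differentiating in $t$ and using the chain rule — here the key point is that $x_i^0$ and $y_i$ are \emph{constant} in $t$, consistent with $\dot x_2 = \dot x_3 = 0$ in \eqref{eq:flow_aut}, so only the first argument contributes — gives
\begin{equation*}
\frac{d}{dt}\int_{\R^{3d}} \varphi\, \de\bar\mu^M_t = \frac{1}{M}\sum_{i=1}^M \nabla_{x_1}\varphi\big(x_i(t), x_i^0, y_i\big)\cdot \dot x_i(t).
\end{equation*}

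The final step is to insert the microscopic dynamics \eqref{eq:micro_linear}, namely $\dot x_i(t) = w(t)x_i(t) + b(t,x_i^0,y_i)$, and recognize that the resulting expression is exactly the right-hand side of the weak formulation evaluated at $\bar\mu^M$. This matches because the vector field $w(t)x_1 + b(t,x_2,x_3)$ evaluated at the support point $(x_i(t), x_i^0, y_i)$ is precisely $w(t)x_i(t) + b(t,x_i^0,y_i)$, so the two sides agree term by term. I do not anticipate a genuine obstacle: this is the standard verification that empirical measures transported along the characteristic ODEs solve the associated continuity equation weakly. The only point requiring mild care is justifying differentiation under the (finite) sum and the regularity of $t\mapsto x_i(t)$, which follows from assumptions (A1)--(A2) guaranteeing Lipschitz, hence absolutely continuous, trajectories; and ensuring $b(t,\cdot,\cdot)$ is integrable enough that the weak formulation is well defined, which is covered by the hypotheses carried over from Proposition \ref{prop:flow}.
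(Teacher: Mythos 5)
Your proposal is correct and follows essentially the same route as the paper: pair the empirical measure with a test function, reduce the integral to a finite sum over the Dirac masses, differentiate in time, and observe that the resulting expression vanishes precisely because each $x_i(t)$ solves the characteristic ODE \eqref{eq:micro_linear}. The only (harmless) difference is that you test against $\varphi\in C^\infty_c(\R^{3d})$ while the paper restricts to test functions depending on $x_1$ alone, which if anything makes your verification slightly more complete.
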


\begin{proof}
    Multiplying Eq. \eqref{eq:bar_mu} by a smooth compactly supported test function $\psi \in C^\infty_C(\R^{d})$, integrating with respect to $x_1,x_2,x_3$, and substituting $\bar \mu_t = \bar \mu_t^M$, we obtain

	\begin{equation*}
	\begin{split}
	0
		&=  \partial_t \int_{\R^{3d}} \psi(x_1) \,  \bar{\mu}^M(t,x_1,x_2,x_3) \, d{x_1} d{x_2} d{x_3} + 
		\\
		& \quad - \int_{\R^{3d}} \nabla_{x_1}\psi(x_1) \,   \bigl(  (w(t)x_1+b(t,x_2,x_3) \bigr) \,\bar{\mu}^M(t,x_1,x_2,x_3) \, d{x_1} d{x_2} d{x_3} \\
		&=  \frac1M\sum_{i=1}^M \nabla_{x_{i}(t)} \psi(x_{i}(t)) \Bigl( \dot{x}_i(t) - w(t)x_i(t)-b(t,x_i^0,y_i)\Bigr).
	\end{split}
\end{equation*}
This shows that $\bar{\mu}^M$ is a solution in distributional sense, provided that  $\dot{x}_i(t) - w(t)x_i(t)-b(t,x_i^0,y_i) = 0$. 
\end{proof}

We furthermore have the following result using the flow. 
\begin{proposition}
Let $\bar{\mu}_0 \in \mathcal{P}_1(\R^{3d})$ be a probability measure and $\xi_t$ the flow satisfying \eqref{eq:flow_xi}, then the pushed-forward measure $\bar{\mu}_t \in \mathcal{P}_1(\R^{3d})$ defined as
$$ \bar{\mu}_t = \xi_t \symbol{35} \bar{\mu}_0$$
fulfills the Cauchy problem for the mean-field equation \eqref{eq:bar_mu} with initial condition $\bar{\mu}_0$ in distributional sense. 
\end{proposition}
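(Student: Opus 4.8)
The plan is to invoke the classical method of characteristics: a measure transported by the flow of a velocity field is automatically a distributional solution of the associated continuity equation. Writing $z=(x_1,x_2,x_3)\in\R^{3d}$ and denoting by $\xi_t$ the flow map solving \eqref{eq:flow_xi} with $\xi_{t_0}=\mathrm{id}$, I would start from the defining property of the push-forward: for any test function $\psi\in C^\infty_c(\R^{3d})$,
\[
\int_{\R^{3d}} \psi(z)\,\de\bar\mu_t(z) = \int_{\R^{3d}} \psi\big(\xi_t(z_0)\big)\,\de\bar\mu_0(z_0).
\]
Everything then reduces to differentiating this identity in $t$ and recognizing the weak form of \eqref{eq:bar_mu}.

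First I would record that the flow $\xi_t$ is well defined, Lipschitz in the initial datum and $C^1$ in $t$. Because the generator $G_t(x_1,x_2,x_3)=\big(w(t)x_1+b(t,x_2,x_3),\,0,\,0\big)^{tr}$ is affine in $x_1$ while the last two components are frozen ($\dot x_2=\dot x_3=0$), the field is globally Lipschitz in space under the regularity hypotheses (A1)--(A2) on $w$ and $b$; Picard--Lindel\"of then yields a unique global flow, and a Gr\"onwall estimate gives at most linear growth in $z_0$. The latter guarantees that first moments are preserved, so that $\bar\mu_t=\xi_t\symbol{35}\bar\mu_0$ remains in $\mathcal P_1(\R^{3d})$, as required by the statement.

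Next I would differentiate the push-forward identity in time. Using the $C^1$-in-$t$ regularity of $\xi_t(z_0)$, the compact support of $\psi$ and the linear-growth bound on $G_t$ to justify differentiation under the integral sign by dominated convergence, the chain rule gives
\[
\frac{d}{dt}\int_{\R^{3d}}\psi\,\de\bar\mu_t = \int_{\R^{3d}} \nabla\psi\big(\xi_t(z_0)\big)\cdot\partial_t\xi_t(z_0)\,\de\bar\mu_0(z_0) = \int_{\R^{3d}} \nabla\psi\big(\xi_t(z_0)\big)\cdot G_t\big(\xi_t(z_0)\big)\,\de\bar\mu_0(z_0).
\]
Applying the push-forward identity a second time rewrites the right-hand side as $\int_{\R^{3d}}\nabla\psi(z)\cdot G_t(z)\,\de\bar\mu_t(z)$, and since only the $x_1$-component of $G_t$ is nonzero this equals $\int_{\R^{3d}}\nabla_{x_1}\psi\cdot\big(w(t)x_1+b(t,x_2,x_3)\big)\,\de\bar\mu_t$, which is precisely the weak formulation of \eqref{eq:bar_mu}. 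Integrating in $t$ and using $\xi_{t_0}=\mathrm{id}$, hence $\bar\mu_{t_0}=\bar\mu_0$, closes the Cauchy problem in the distributional sense.

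The main obstacle is not the formal chain-rule step but its analytic justification: the well-posedness and $C^1$-in-time regularity of the flow together with the legitimacy of interchanging differentiation and integration. This is where the structural choice of the extended system \eqref{eq:flow_aut} pays off: freezing $x_2,x_3$ converts the $x$- and $y$-dependent bias $b(t,x,y)$ into a genuine autonomous field on $\R^{3d}$ whose control $b(t,x_2,x_3)$ is constant along characteristics, so that the Lipschitz and growth estimates needed for the dominated-convergence argument follow directly from (A1)--(A2).
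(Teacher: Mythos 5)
Your proposal is correct and follows essentially the same route as the paper: test against $\psi\in C^\infty_c(\R^{3d})$, use the push-forward identity to transfer the integral to the initial datum, differentiate in time via the chain rule with $\partial_t\xi_t=G_t(\xi_t)$, and push back to recognize the weak form of \eqref{eq:bar_mu}. The only difference is that you supply the standard analytic justifications (global well-posedness of the affine flow, moment preservation so $\bar\mu_t\in\mathcal P_1$, and dominated convergence for differentiating under the integral) that the paper leaves implicit.
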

\begin{proof}
	Let $\psi \in C^\infty_C(\R^{3d})$ be a smooth compactly supported test function, define the variable $z:= (x_1,x_2,x_3) \in \R^{3d}$, and then 
	\begin{align*}
				\int_{\R^{3d}} \psi(z) \, \partial_t \bar \mu(t,z)\, dz
			= \frac{d}{dt} \int_{\R^{3d}} \psi(z) \, (\xi_t \symbol{35}  \bar{\mu}_0)(z) \, dz = \\ \frac{d}{dt} \int_{\R^{3d}} \psi \bigl(\xi_t(z^0)\bigr) \, \bar{\mu}_0(z^0) \, dz^0 
				= 	 \int_{\R^{3d}} \nabla_z \psi \bigl(\xi_t(z^0)\bigr)\,  \partial_t  \xi_t(z^0) \, \bar \mu_0(z^0) \, dz^0 \\ 
				= 	 \int_{\R^{3d}}  \nabla_z \psi (z)\,  G_t(z)\, (\xi_t \symbol{35}  \bar \mu_0)(z) \, dz 
				= 	 \int_{\R^{3d}}  \nabla_z \psi (z)\,  G_t(z)\, \bar \mu (t,z)\, dz.\\
			\end{align*}
   This concludes the proof.
   \end{proof}

The results presented in~\cite{HTTV_training} regarding well-posedness and convergence of the empirical measure $\bar{\mu}^M$ towards $\bar{\mu}$ in $1-$Wasserstein for $M\to \infty$ hold true.  Also,  Proposition~\ref{prop:flow} and Theorem~\ref{thm:flow} concerning the controllability extend to the flow defined by $\xi_t$.

\subsection{Explicit example of Controllability}
We close the section by stating an explicit example of the computation of a static control. In this case, the associated mean-field characteristic flow evolves as follows: 
\begin{equation}\label{eq:flow_time_indep}
	\begin{cases}
		\dot{\Phi}(t)= w(t) \Phi(t) + b(x^0, y), \\
		\Phi(t_0) = x^0,
	\end{cases}
\end{equation}
where, for the sake of simplicity in our notation, we define $\Phi(t):=\Phi(t,x^0)$. 
Our objective is to steer the system from $x^0$ to the desired target $y$ using a control strategy, even if it may not be optimal, that remains constant in time.

Solving the inhomogeneous linear Cauchy problem \eqref{eq:flow_time_indep}, we obtain the solution
$$ \Phi(t) = c_1(t) \Bigl(  x^0 +  c_2(t) \, b(x^0, y) \Bigr)$$
where
$$c_1(t) = \exp\Bigl( \int_{t_0}^{t} w(s) \, ds \Bigr), \quad c_2(t) = \int_{t_0}^{t} \exp\Bigl(- \int_{t_0}^{s} w(r) dr \Bigr) \, ds.$$ 
By imposing the terminal condition $\Phi(T) = y$ and solving for the control $b$ we can derive
\begin{equation}\label{eq:static_control}
b(x^0,y) = \frac{y-x^0 c_1(T)}{c_1(T)c_2(T)}.
\end{equation}

To provide the reader with an understanding of how the static control behaves, we explore the following distinct scenarios, focusing on different configuration of the weights $w(t)$

\begin{description}
	\item[\textbf{Case (1).}] If $w(t) = \omega t^{\alpha}:$
		$$\Phi(t) = x^0 \, e^{\frac{\omega}{\alpha +1}(t^{\alpha +1}-t_0^{\alpha+1})}  
		+ b(x^0,y) \, e^{\frac{\omega}{\alpha +1}t^{\alpha +1}}   \int_{t_0}^{t}  e^{-\frac{\omega}{\alpha +1}s^{\alpha +1}}  ds. $$
	\item[\textbf{Case (2).}] If $w(t) = \omega e^{\alpha t}:$
		$$
	 \Phi(t) = x^0  \, e^{\frac{\omega}{\alpha}(e^{\alpha t}-e^{\alpha t_0})}  
	+ b(x^0,y)  \, e^{\frac{\omega}{\alpha}e^{\alpha t}}    \int_{t_0}^{t}  e^{-\frac{\omega}{\alpha}e^{\alpha s}}  ds. $$
\end{description}
\begin{figure}[t]\label{fig:decay}
	\begin{center}
\includegraphics[width=0.5\linewidth]{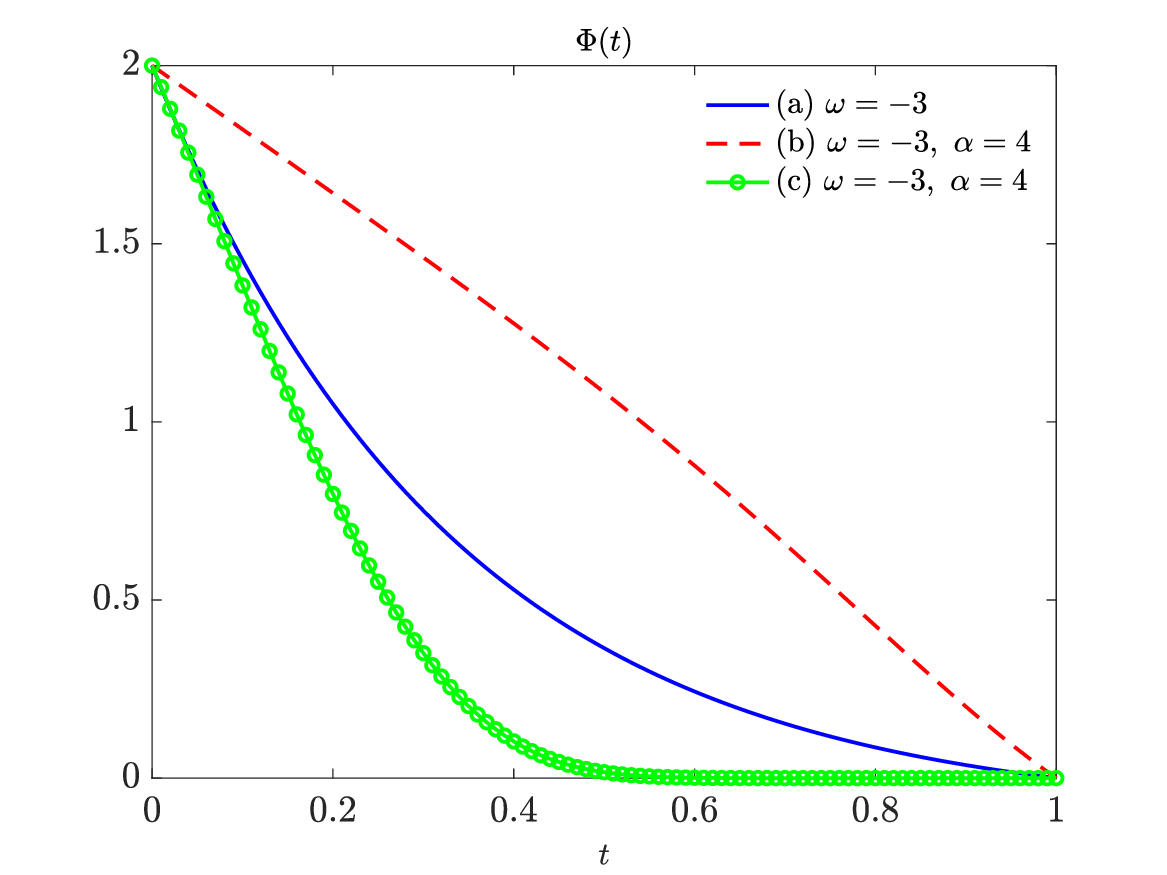}
\caption{Time evolution of $\Phi(t)$ towards the target value $y = 0$ for different weights configurations. The blue solid line (a) and the red dashed line (b) represent \textbf{Case (1)} with $\alpha = 0$ and $\alpha = 4$ respectively. The green circle line (c) represent \textbf{Case (2)} with $\alpha = 4$.}
	\end{center}
\end{figure}
In Figure \ref{fig:decay}, we illustrate the decay towards the target state $y = 0$. The initial condition is $x^0 = 2$, and the time interval $[0,1]$ is discretized with a time step $\Delta t = 0.01$. In all cases, despite the control being static and not optimal, we consistently achieve the desired decay
$ | \Phi(t)-y|\rightarrow0$, as $t\rightarrow T$. In particular, with $\omega = -3$ and $\alpha = 4$ we can observe the fastest decay for the \textbf{Case (2)}, that is an exponential type of weight $w(t)$. 
\section{A numerical approach for general training based on the kernel-based learning method}\label{subsec:kernelmethod}

The previous results show that the continuous ResNets is controllable if the activation function is linear, see also~\cite{Zuazua:neuralODE,Osher:controllability} for additional results. This provides in simple cases explicit solutions. However, to also allow for  training in the case of  nonlinear activation functions and to incorporate the weights, we propose an efficient training method. The computational framework to determine optimal parameters $w$ and $b$ in continuous ResNets will be based on a kernel method, to approximate the loss function, in both the microscopic case~\eqref{eq:lossMicro} and the mean-field limit~\eqref{eq:lossMeso}.

Kernel methods are supported by a well-developed theory~\cite{steinwart2008support,shawe2004kernel}, and come with efficient algorithms~\cite{scholkopf2002learning,williams2006gaussian,kanagawa2018gaussian}. From a mathematical point of view, these methods rest on the concept of kernels and function spaces generated by kernels, so-called reproducing kernel Hilbert spaces. In recent years, there has been an increasing interest in the application of kernel methods approaches in the context of interacting particle systems, see e.g.~\cite{CFiedler2023KRM}.

Define {the function $\mathcal{L}$} on the space $X:=\R^{d\times d}\times \R^{d}$ of possible pairs of control $(w,b)$ by
\begin{align*}
\mathcal{L}: \ X \ \rightarrow \ \R^+_0 \,, \quad
                              (w,b) \ \mapsto  \    \mathcal{L}((w,b)) \, .
\end{align*}

Since the loss function of a typical neural network contains the summation of possibly many target values, different techniques are known to efficiently compute its gradient, e.g. the stochastic gradient method. Here, we propose to approximate $\mathcal{L}$ by a function  $\widehat{\Lcal}$ that is fast and simple to evaluate with respect to the parameters $(w,b).$ Provided $\widehat{\Lcal}$ as an accurate approximation $\Lcal$,  we expect that training of $\Lcal$ can be replaced by a more efficient training of $\widehat{\Lcal}.$ The approximation should be consistent on the microscopic and mean-field level.   
Therefore, we consider kernel-based methods that have recently been shown to allow for an extension in the case of infinitely many agents (or data points $M$ in our case)~\cite{CFiedler2023KRM}.

The approximation  $\widehat{\Lcal}$  is given by a weighted sum 
\begin{equation}\label{eq:cal_L}
	\widehat{\Lcal}(w,b)  = \sum_{n=1}^N \alpha_n \, k\left(\, (w,b) \, , \, \bigl(w,b\bigr)_n\right),
\end{equation}
where $\alpha_1,\ldots,\alpha_N\in\R$ are coefficients, $\bigl(w,b\bigr)_1, \ldots,\bigl(w,b\bigr)_N\in X$ are given values and $k: X\times X\rightarrow\R$ is a kernel function, cf.\ e.g.\ \cite{scholkopf2001generalized}. To specify the approximation we need to determine suitable choices for $(w,b)_n$ and then compute the corresponding coefficients $\alpha_n$ of the approximation. Both will be detailed below.

The set of all  functionals $\widehat{\Lcal}$ represented by the series \eqref{eq:cal_L} forms a Hilbert space. Hence, the question is closely related to the problem of describing the Hilbert space generated by the kernel $k$, the so-called reproducing kernel Hilbert space.
For the convenience of the reader, some definitions and concepts on reproducing kernel Hilbert spaces, see \cite[Chapter~4]{steinwart2008support}.
	\begin{definition} Let $\mathcal{X} \not = \emptyset$ be an arbitrary set and $H \subseteq \R^\mathcal{X}$ a real Hilbert space of functions. Then, the function $k: \mathcal{X} \times \mathcal{X} \rightarrow \R$ is called a kernel, if  there exists a real Hilbert space $\mathcal{H}$ and a map $\Phi: \mathcal{X} \rightarrow \mathcal{H}$ 
		\begin{equation*}
			k(x,x') = \langle \Phi(x'), \Phi(x) \rangle_\mathcal{H} \quad \forall x,x'\in \mathcal{X}.
		\end{equation*}
		The map $\Phi$ is called the feature map and $\mathcal{H}$ is the  \defm{feature space}.  
				\end{definition} 
			\begin{definition}
		The mapping $k$ is called a \defm{reproducing kernel} (for $H$) if 
		$k(\cdot,x)\in H \quad \forall x\in \mathcal{X}$ and if $f(x)=\langle f, k(\cdot, x)\rangle_H \quad \forall f\in H, x \in \mathcal{X}.$
		If a kernel $k$ has the previous property, then $k$ is said to have  the \defm{reproducing property}.
			\end{definition} 
	\begin{definition}
		$H$ is called a \defm{reproducing kernel Hilbert space} (RKHS) if all evaluation functionals are continuous, i.e., if 
		for all $x \in \mathcal{X}$ the functionals $\delta_x: H \rightarrow \R, \; \delta_x(f)=f(x)$ are bounded.
			\end{definition} 
		Finally, we say that $k$ is positive definite  if for all $N\in\mathbb{N}$, $x_1,\ldots,x_N\in\mathcal{X}$ and $\alpha_1,\ldots,\alpha_N\in\mathbb{R}$ we have
		\begin{equation*}
			\sum_{i,j=1}^N \alpha_i \alpha_j k(x_j,x_i) \geq 0.
		\end{equation*}
Note that only symmetric and positive definite functions $k$ can be kernels. We, therefore, use   Gaussian kernels with the particular choice 
 \[k_\gamma(x,x')=\exp\left(-\|x-x'\|^2/2\gamma \right),\]
  where $\gamma\in\Rp$ plays the role of a length scale.
For more details on this kernel and its associated RKHS, we refer to \cite[Section~4.4]{steinwart2008support}.

The idea of the approximation problem \eqref{eq:cal_L} is as follows. Assume we have $N$ possibly noisy measurements of the feature map 
\begin{align}\label{inference} 
z_i = \Lcal( (w,b)_i) + \eta, \quad i=1,\dots,N,
\end{align}
for noise $\eta$. The latter can be  distributed e.g. $\mathcal{N}(0,\Sigma).$ The regression problem is solved e.g. by support vector machines \cite{steinwart2008support}  or Gaussian process estimation \cite{williams2006gaussian}, being two particular kernel methods.
Using kernel methods, problem \eqref{inference} can be solved by $\widehat{\Lcal}$ belonging to the RKHS $H$ given as a solution to a minimization problem for a continuous, strictly convex function ${\bf L}$ and $\lambda>0:$
\begin{align}\label{inference2}
	\widehat{\Lcal} = \mbox{argmin}_{\Lcal \in H} \; {\bf L}\left( \Lcal((w,b)_1), \Lcal((w,b)_2), \dots, \Lcal((w,b)_N) \right) + \lambda \| \Lcal \|^2_{H}\,,
\end{align}
with
\begin{align}\label{regression2}
{\bf L}=\frac{1}N \sum\limits_{i=1}^N \| z_i - \Lcal( (w,b)_i) \|^2_\Sigma. 
\end{align}
 The norm in \eqref{inference2} is given by $\| \Lcal \|_{H}^2 = \sum\limits_{i=1}^N \sum\limits_{j=1}^N \alpha_i \alpha_j k\left(\, (w,b)_i \, , \, \bigl(w,b\bigr)_j\right)$ and the parameter $\lambda>0$ should be chosen according to a discrepancy principle. 

In the numerical tests of sections \ref{sec:kernel:micro} and \ref{sec:kernel:meso}, we consider the noisy free case (i.e. 
$\Sigma = 0$ and $\lambda = 0$), hence the unknown coefficients $\alpha_n$ in~\eqref{eq:cal_L}, are obtained by interpolation. This leads to the linear system:
\[
\sum_{n=1}^N \alpha_n k \bigl(\,(w,b)_m , \, (w,b)_n \, \bigr) = \mathcal{L}((w,b)_m), \qquad m=1,\ldots,N.
\]

The training algorithm we propose consists of determining $\widehat{\Lcal}$ and considering the optimization of  the function $\widehat{\Lcal}$ with respect to the variables $(w,b)$. Numerically, we apply  a gradient descent method and also treat the case of nonlinear activation functions. Since both, the optimization problem, as well as the kernel approximation, allow for a mean--field description. Hence, we expect performance results independent of the number of data points $M$.


\subsection{Microscopic control problem} \label{sec:kernel:micro}

In this Section we present  numerical tests concerning the microscopic control problem introduced in~\eqref{eq:lossMicro}.  In particular, we are interested in finding the optimal values of both $w$ and $b$ constant in time using the kernel method.

The learning tasks involve {the function $\mathcal{L}$} on the space of possible pairs $(w,b)$, where
\begin{equation*}
\mathcal{L}\bigl( \, (w,b) \, \bigr) = \frac{1}{M}\sum_{i=1}^{M} \ell ( x_i(T) - y_i),
\end{equation*}
and where $x_i \in \R^d, \ i=1,\dots,M$ evolves in time according to \eqref{eq:micro} with the specific choice of weights $w\in\R^{d\times d}$ and bias $b\in\R^d$. In the numerical experiments, we consider $d=1$ and $\ell(z) = | z |^2$.

With the kernel method, we generate the approximation of $\Lcal$ as in~\eqref{eq:cal_L} for a fixed  $N=20$ and given observations $(w,b)_n$, $n=1,\dots,N$.
In the following numerical example, we consider the Gaussian kernel parameter $\gamma = 10^{-2}$ and $\| \cdot \|$ the usual Euclidean norm.

In the microscopic non-linear setting, we take $M = 50$ as size of the data-set, a ReLU activation function $\sigma(z) = \max\{0,z\}$, and a time-step $\Delta t = 10^{-2}$ for the discretization of the time interval $[t_0,T]$ with $t_0=0$ and $T=10$. The initial input data is uniformly distributed, namely $x_i^0 \sim \mathcal{U}([1,2])$, for every $i=1,\dots,M$.

The parameters space is chosen as $(w, b) \in [\, 0, \, 2.5\cdot 10^{-1}\, ] \, \times \, [\, 0, \, 2.5\cdot 10^{-3}\, ]$ which is discretized with a uniform grid of size $10^{-2}$. This leads to a total number of $N_{tot}=676$ pairs $(w, b)$. We recall that we have access to the observed value of $\Lcal$ just for $N=20$ out of the  $N_{tot}=676$ weights-bias pairs.

We choose the target $y_i=y$ for every $i=1,\dots,M$ computed by
$$y = \frac{1}{M}\sum_{i=1}^{M} \tilde{x}_i(T),$$
where $\tilde{x}_i(T)$ represents the state at final time $T$ of the input signal $x_i^0$ evolved according to \eqref{eq:micro} with the value of $(w,b)$ chosen in the centre of the parameter grid. This artificially corresponds to a classification problem with a single classifier. 

\begin{figure}[t]
\begin{center}
\includegraphics[width=0.24\linewidth]{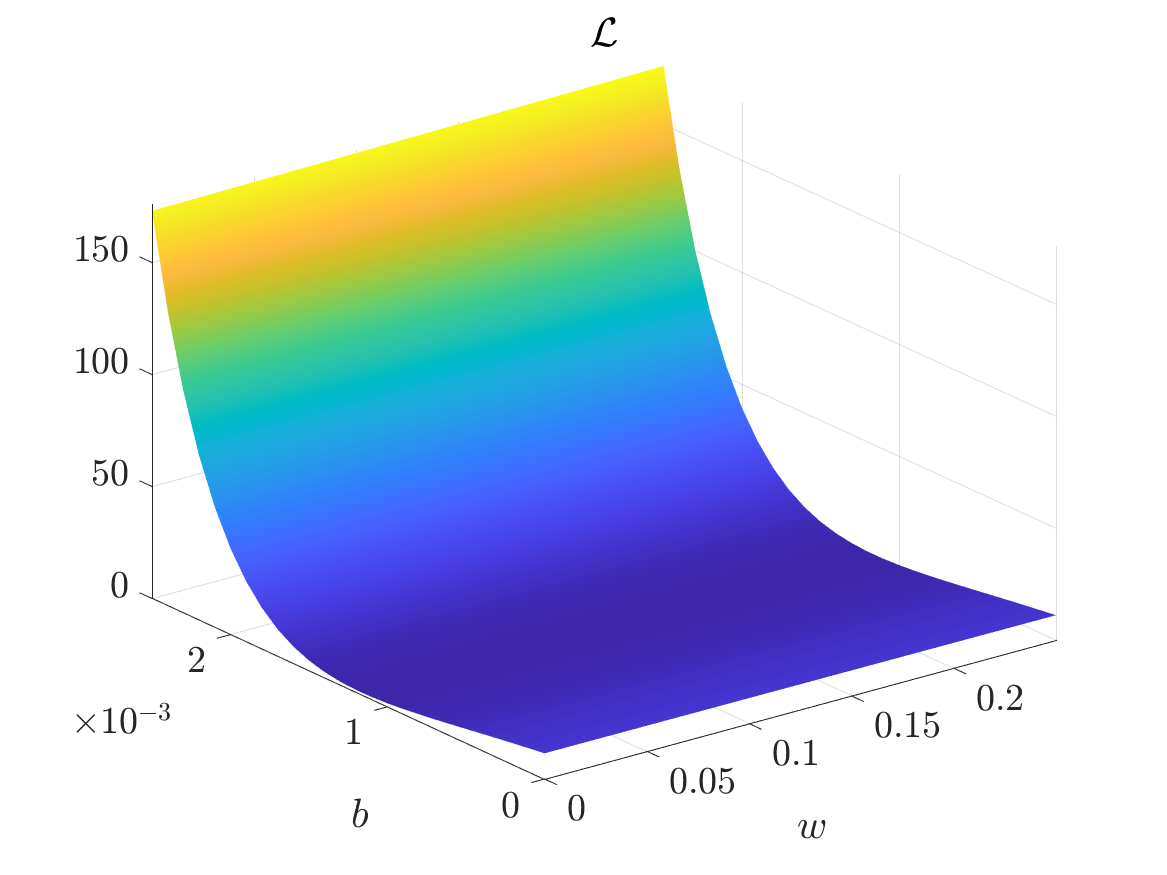}
\includegraphics[width=0.24\linewidth]{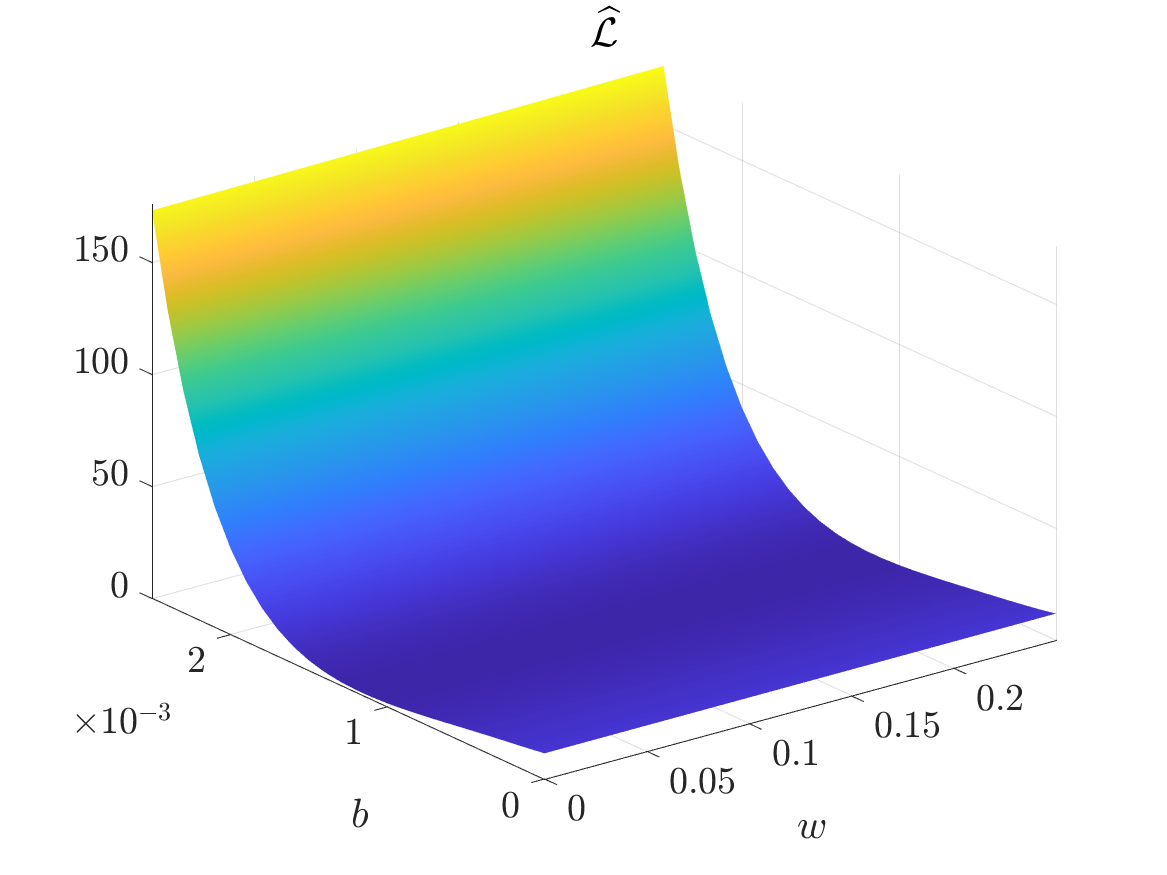}
\includegraphics[width=0.24\linewidth]{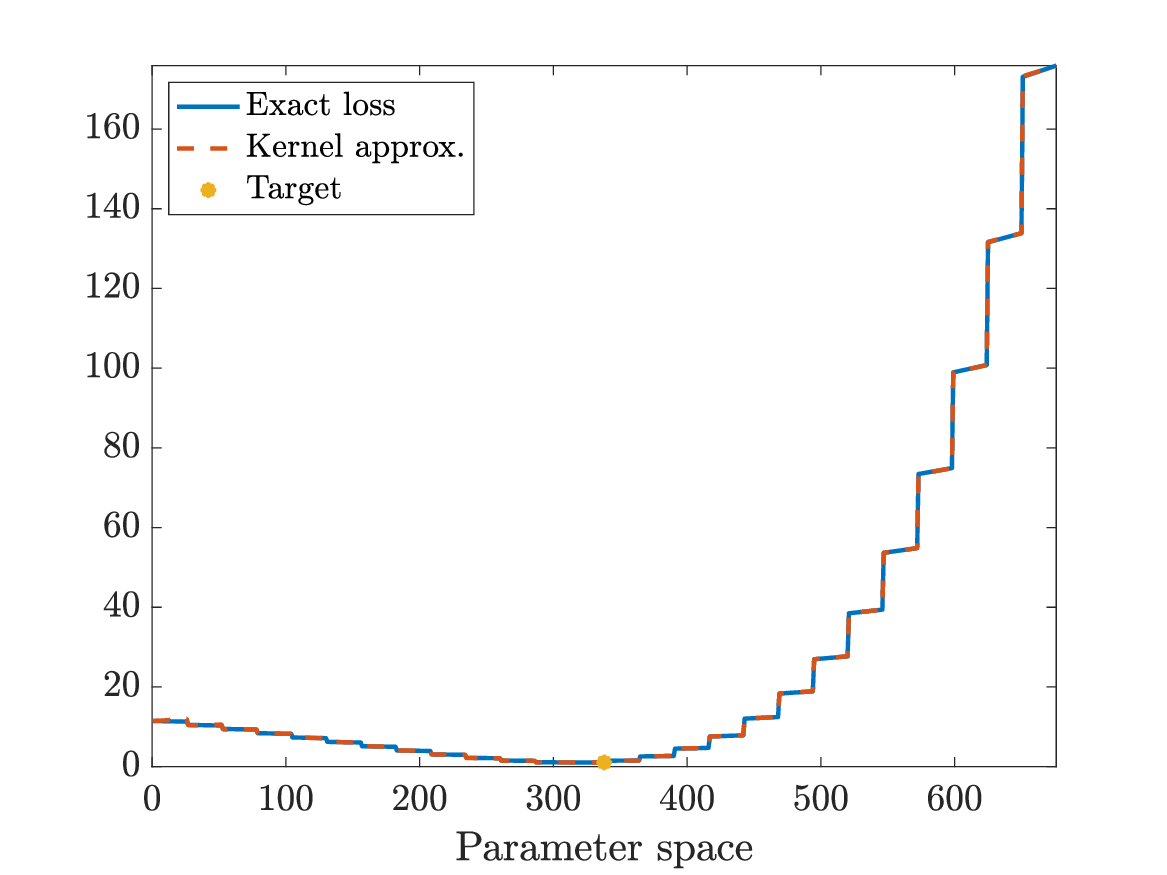}
\includegraphics[width=0.24\linewidth]{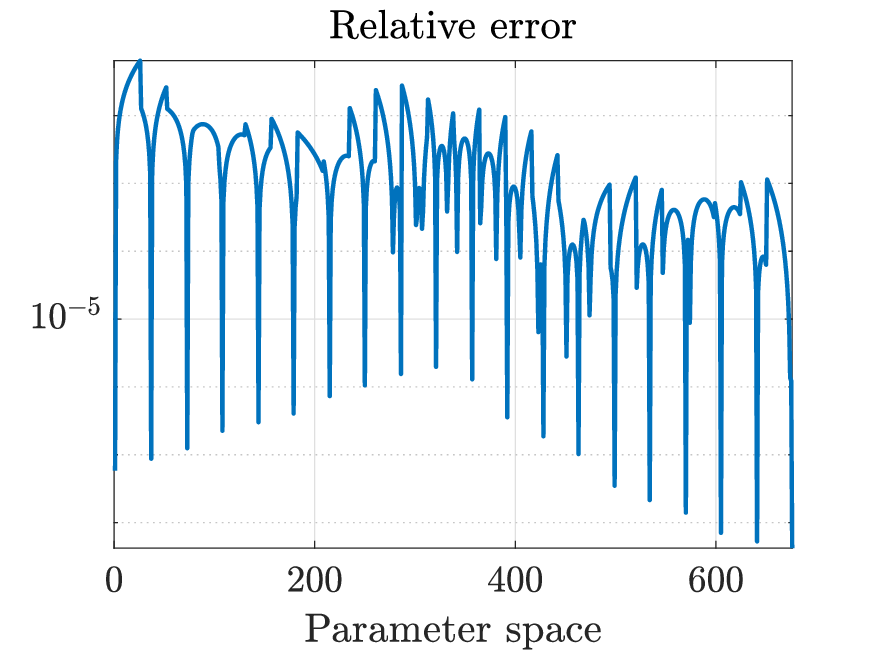}
\caption{Microscopic case. Leftmost panel: exact loss functional computed on the parameter grid. Center-left panel: approximated loss functional via kernel method with $N=20$ points. Center-right panel: comparison between $\mathcal{L}$ and $\widehat{\mathcal{L}}$. Rightmost panel: relative error between $\mathcal{L}$ and $\widehat{\mathcal{L}}$.}
\label{fig:1delta}
\end{center}
\end{figure}

In Figure~\ref{fig:1delta} we show a comparison between the exact loss functional $\mathcal{L}$ and the approximated functional $\widehat{\mathcal{L}}$. We observe that there is a good agreement, in fact, the relative error $(w,b)\mapsto |\mathcal{L}-\widehat{\mathcal{L}}|/|\mathcal{L}|$ is in the interval $[4\cdot10^{-9},5\cdot10^{-2}]$.

The approximation $\widehat{\Lcal}$ of the exact loss allows us to reformulate the control problem as a simpler minimization of the function $\widehat{\Lcal}$ with respect to $(w,b)$. We numerically look for the minimum using a projected gradient descent method in order to satisfy the box constraint given by the parameter domain. Specifically, the gradient method is solved for simplicity
with a constant stepsize of value $10^{-5}$. In Figure~\ref{fig:gd} we observe that $(w,b)$ converges towards the optimal parameters, and the cost decreases approaching the optimal. Moreover, we compare the trajectory of the mean of the data, noticing that the target is well approximated.

\begin{figure}[t]
\begin{center}
\includegraphics[width=0.32\linewidth]{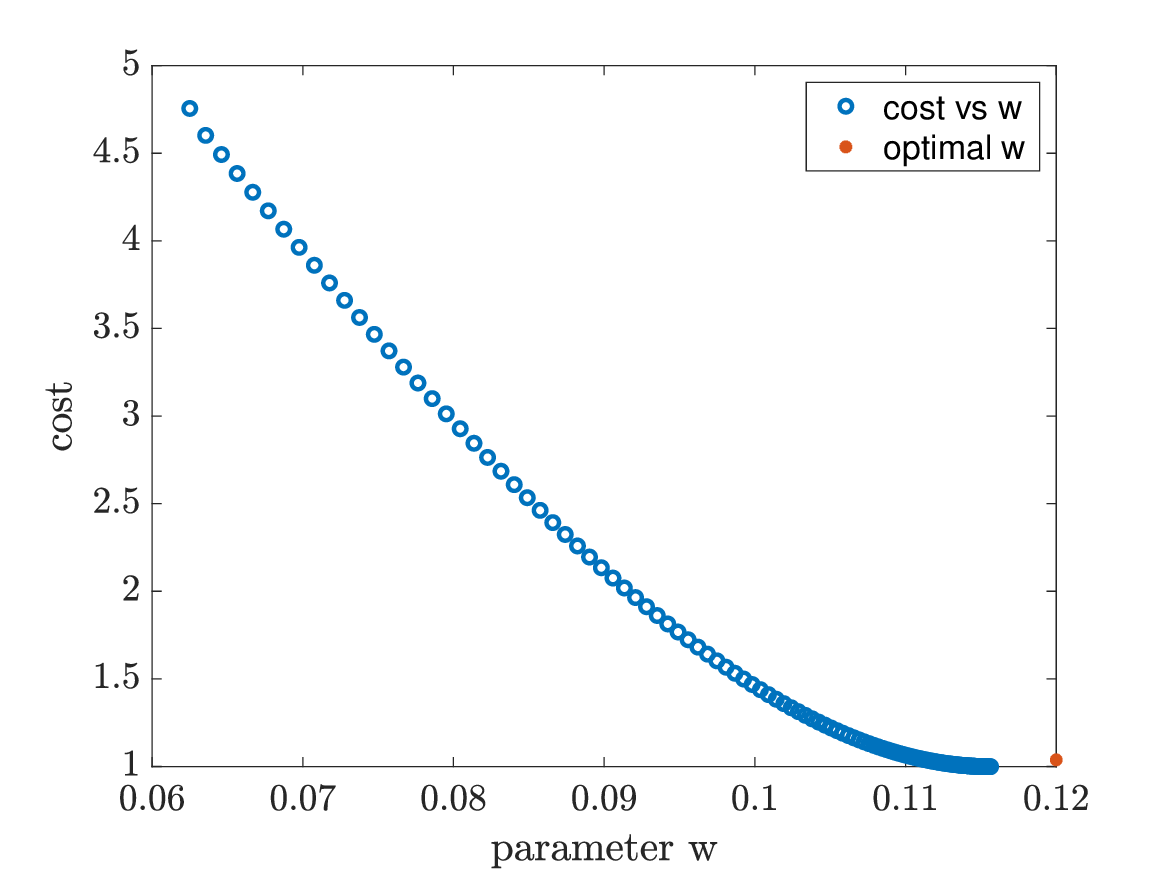}
\includegraphics[width=0.32\linewidth]{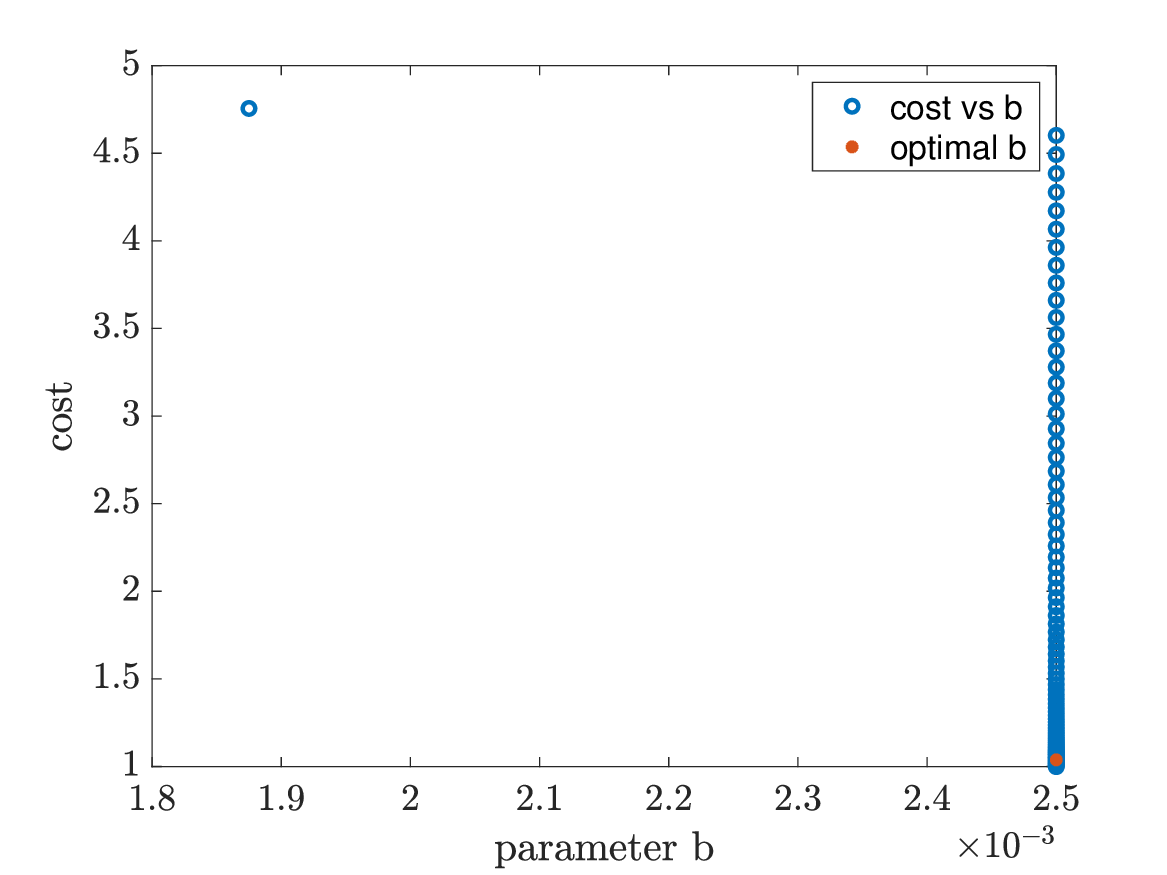}
\includegraphics[width=0.32\linewidth]{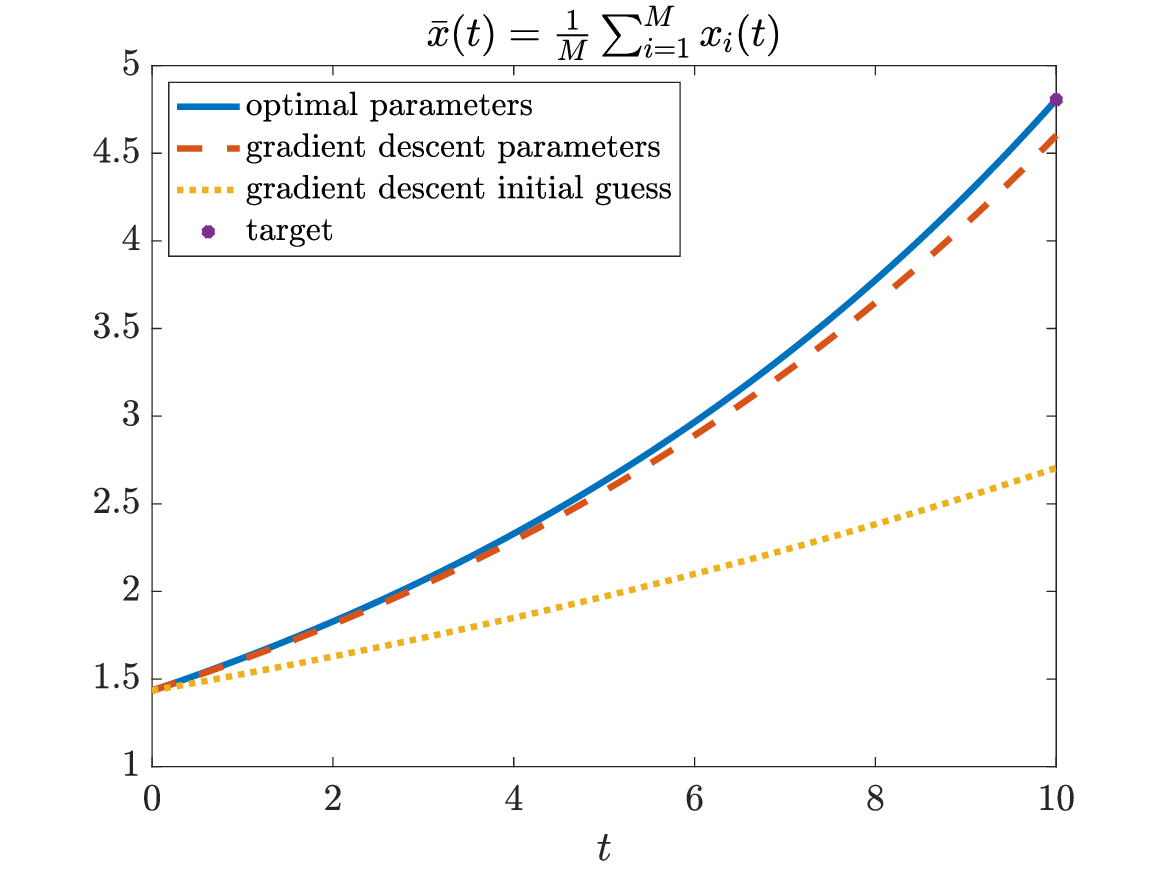}
\caption{Microscopic case. Left: $w$ vs.~the cost during the iteration of the gradient method. Center: $b$ vs.~the cost during the iteration of the gradient method. Right: evolution of the mean of the data with different parameters.}
\label{fig:gd}
\end{center}
\end{figure}

The approach can be easily generalized to the case of a classification with more classifiers. The results are  similar and therefore skipped in the presentation.

\subsection{Mean-field control problem} \label{sec:kernel:meso}

The learning tasks involve {the function $\mathcal{L}$} on the space of possible (constant) pairs $(w,b)$, where
\begin{equation} \label{eq:mesoL}
\mathcal{L}\bigl( \, (w,b) \, \bigr) = \int_{\R^{d}}  \int_{\R^{d}} \ell(x-y) \de \nu(y) \mathrm{d}\mu(T,x) 
\end{equation}
where $\nu\in\mathcal{P}_1(\R^d)$ is the probability measure of the target data and $\mu_T\in\mathcal{P}_1(\R^d)$ is the solution of the mean-field equation at time $T$ with the specific choice of weights $w\in\R^{d\times d}$ and bias $b\in\R^d$. In the numerical tests, we consider $d=1$ and $\ell(z) = | z |$.

We use the kernel method to approximate $\Lcal$ with $N=20$ and given observations $(w,b)_n$, $n=1,\dots,N$. The coefficients $\alpha_n$, $n=1,\dots,N$, are computed again by interpolation as discussed in Section~\ref{sec:kernel:micro}. The parameters space is chosen as $(w, b) \in [\, 0, \, 2.5\cdot 10^{-1}\, ] \, \times \, [\, 0, \, 2.5\cdot 10^{-3}\, ]$ which is discretized with a uniform grid of size $2\cdot10^{-2}$. This leads to a total number of $N_{tot}=169$ pairs $(w, b)$. To approximate the loss, we assume to have access to $N=20$ observed value of $\Lcal$.

We consider a ReLU activation function $\sigma(z) = \max\{0,z\}$ and a Gaussian initial condition $\mu_0 \sim \mathcal{N}(1.5,0.1)$. The evolution of $\mu_0$ is obtained with first-order finite volume approximation of the mean-field equation on the domain $[0,3]$ with space grid $\Delta x = 10^{-1}$ and time-step $\Delta t = 10^{-2}$ for the discretization of the time interval $[t_0,T]$ with $t_0=0$ and $T=1$.

We choose as target the distribution obtained at final time $T$ with the value of $(w,b)$ chosen in the center of the parameter grid. This artificially corresponds to a learning problem of the moments of the target distribution.

\begin{figure}[t]
\begin{center}
\includegraphics[width=0.24\linewidth]{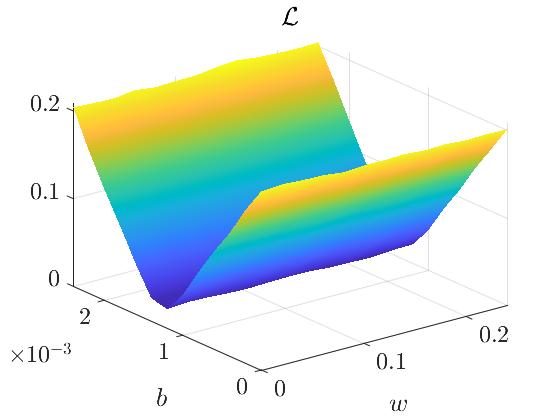}
\includegraphics[width=0.24\linewidth]{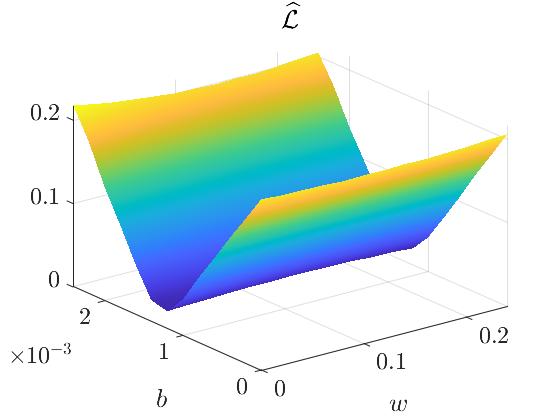}
\includegraphics[width=0.24\linewidth]{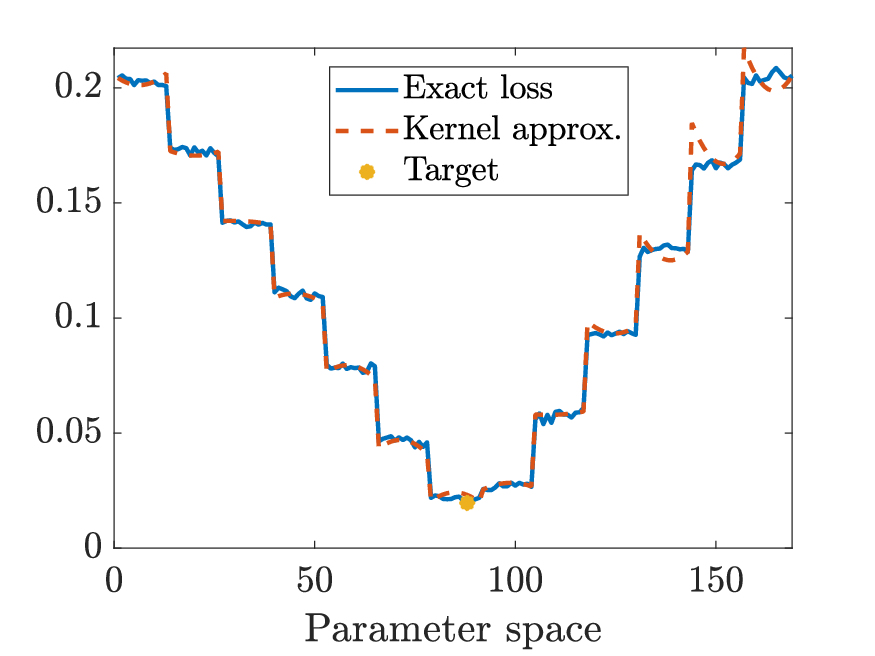}
\includegraphics[width=0.24\linewidth]{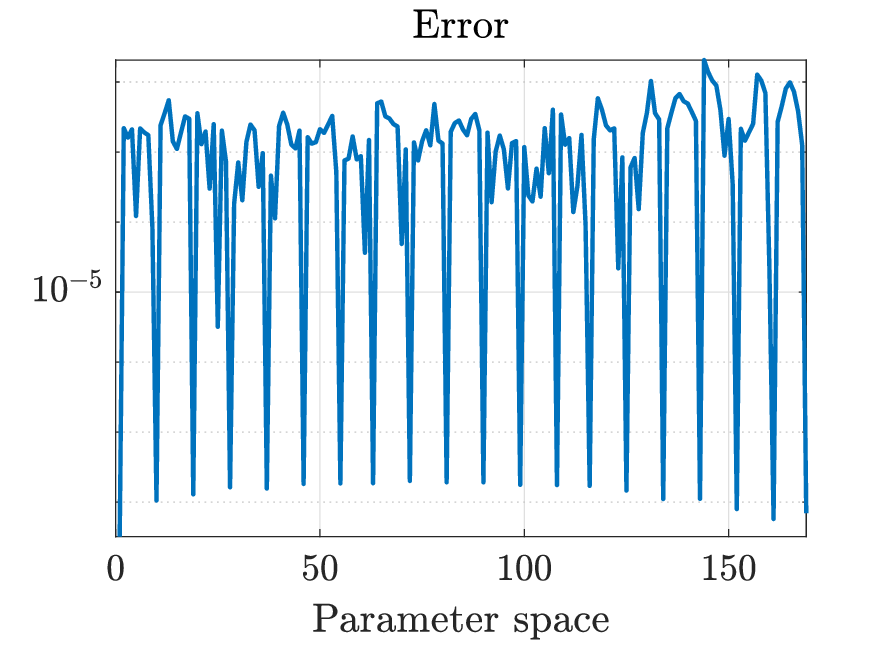}
\caption{Mean-field case. Leftmost panel: exact loss functional computed on the parameter grid. Center-left panel: approximated loss functional via kernel method with $N=20$ points. Center-right panel: comparison between $\mathcal{L}$ and $\widehat{\mathcal{L}}$. Rightmost panel: relative error between $\mathcal{L}$ and $\widehat{\mathcal{L}}$..}
\label{fig:mf}
\end{center}
\end{figure}

In Figure~\ref{fig:mf} we show a comparison between the exact loss functional $\mathcal{L}$ and the approximated one $\widehat{\mathcal{L}}$ via kernel method. In order to compute the distance in~\eqref{eq:mesoL} we have performed a discrete sampling from the target $\nu$ and the output of the network $\mu_T$. The size of the sampling is chosen as $10^2$ and repeated $100$ times for each choice of the parameters in the grid in order to minimize the noise. We observe that  the relative error $(w,b)\mapsto |\mathcal{L}-\widehat{\mathcal{L}}|/|\mathcal{L}|$ is in  the interval $[3\cdot10^{-9},2\cdot10^{-1}]$.

In Figure~\ref{fig:mf2} we observe that $w$ converges towards the optimal parameters during the iteration of the gradient method, and the cost decreases approaching the optimal. Whereas, there is no update of the initial guess of $b$. Moreover, we notice that the target distribution is well approximated.

\begin{figure}[t]
\begin{center}
\includegraphics[width=0.32\linewidth]{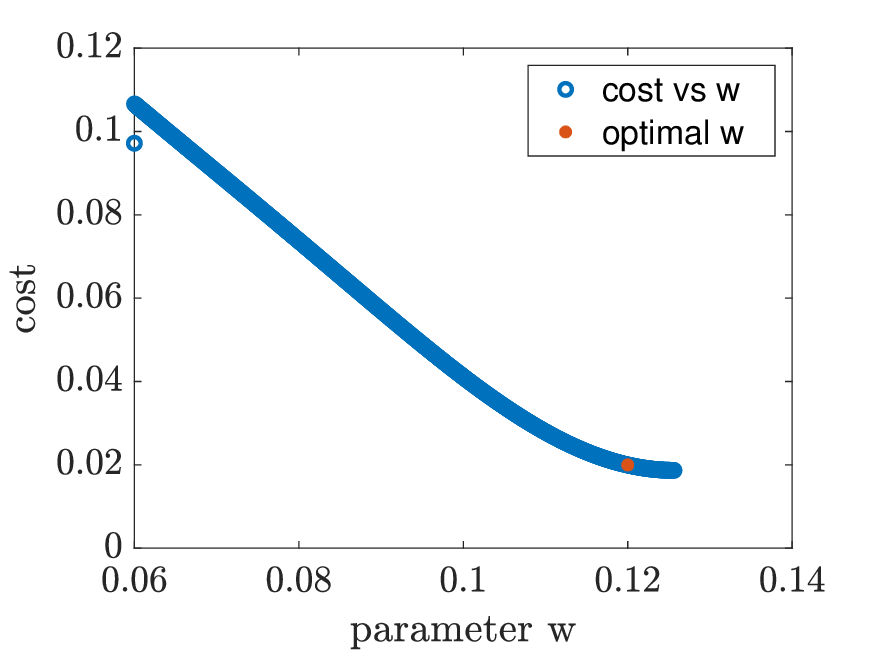}
\includegraphics[width=0.32\linewidth]{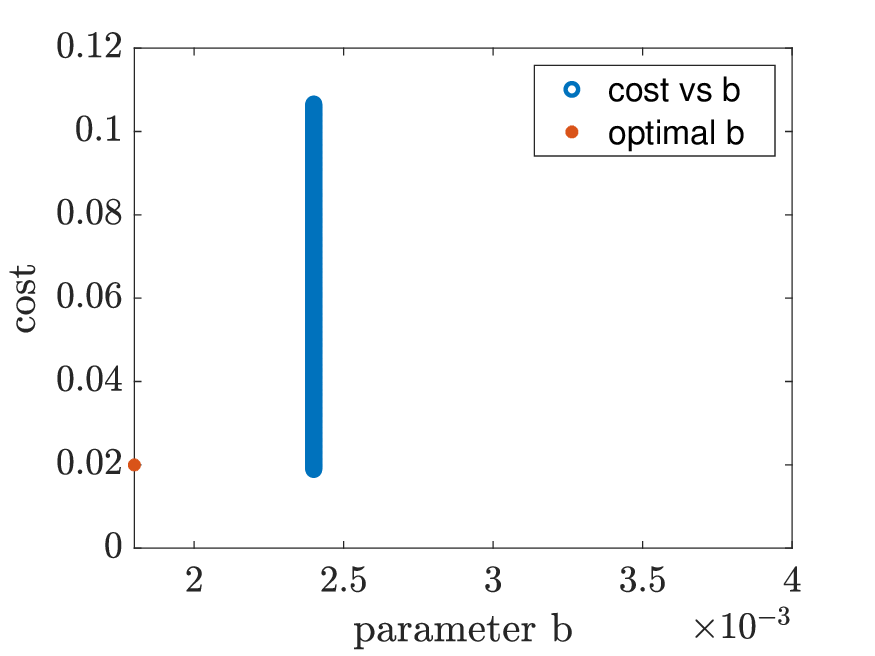}
\includegraphics[width=0.32\linewidth]{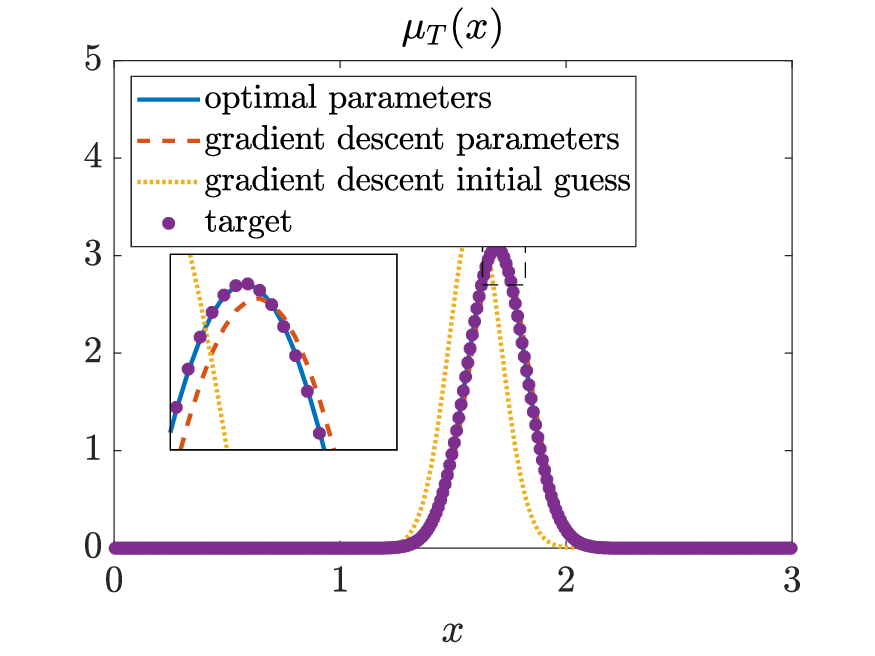}
\caption{Mean-field case. Left: $w$ vs.~the cost during the iteration of the gradient method. Center: $b$ vs.~the cost during the iteration of the gradient method. Right: evolution of the distributions with different parameters.}
\label{fig:mf2}
\end{center}
\end{figure}

\section{Conclusion}

In this chapter, we have considered controllability problems associated with continuous limits of residual neural networks, both from a theoretical and computational point of view. In particular, the contribution of this work is twofold. We have established the exact controllability of linear neural differential equations and  their mean-field reformulation. We have established controllability for bias as control of the system. Then, we  proposed a computational framework to efficiently solve the general optimal control problem associated with the training process of a neural network. The numerical approach is based on the reproducing kernel-based method that allows reformulating the control problem as a reduced unconstrained minimization problem. So far, the technique has been applied to static controls only. Future work includes extending the algorithm to time-dependent control using model predictive control.

\bibliographystyle{abbrv}
\bibliography{biblioCS2,literaturmean,refs}
	
\end{document}